\documentclass[12pt]{article}
\title{Smoothness of solutions to 
	\\
	the initial-boundary value problem for 
	\\
	the telegraph equation on the half-line with 
	\\
	a locally summable potential}
\author{S.\,A.\,Simonov
	\thanks{St. Petersburg Department of Steklov Mathematical Institute of Russian Academy of Sciences, Fontanka 27, St. Petersburg, Russia, 191023; Alferov Academic University of the Russian Academy of Sciences, Khlopina 8/3, St. Petersburg, Russia; sergey.a.simonov@gmail.com}}
\date{}

\usepackage{enumitem}
\usepackage{amssymb,amsmath,amsthm}
\usepackage{mathrsfs}
\usepackage[colorlinks=true]{hyperref}

\newtheorem{Lemma}{Lemma}
\newtheorem{Theorem}{Theorem}

\newtheorem{Remark}{Remark}
\newtheorem{Corollary}{Corollary}

\def\matr{\mathbb M^n_{\mathbb C}}
\def\vt{\widetilde v}
\def\wt{\widetilde w}

\def\mF{\mathscr F}

\def\mH{\mathscr H}
\def\mK{\mathscr K}

\def\mU{\mathscr U}

\def\Dom{{\rm Dom\,}}
\def\Ker{{\rm Ker\,}}

\def\supp{{\rm supp\,}}
\def\dx{\frac{d^2}{dx^2}}

\def\gs{\geqslant}

\def\rmv{_{\rm v}}
\def\bb{\mathbb}
\def\bbc{\mathbb C^n}
\def\ot{[0,T]}
\def\otbbc{\ot;\bbc}
\def\otk{\ot;\mK}

\begin{document}
\maketitle

\begin{abstract}
We study solutions to the system
\begin{align*}
	&u_{tt}-u_{xx}+q(x)u=0, & x>0,\ t>0,\\
	&u|_{t=0}=u_t|_{t=0}=0,  & x\gs0,\\
	&u|_{x=0}=g(t), & t\gs0,
\end{align*}
with a locally summable Hermitian matrix-valued potential $q$ and a $\mathcal C^{\infty}$-smooth $\bb C^n$-valued boundary control $g$ vanishing near the origin. We prove that the solution $u^{g}(\cdot,T)$ is a function from $\mathcal W^2_1(\otbbc)$ and that the control operator $W^T:g\mapsto u^{g}(\cdot,T)$ is an isomorphism in $\mathcal L_2(\otbbc)$, and, in the case $q\in \mathcal L_2(\otbbc)$, also an isomorphism in $\mathcal H^2(\otbbc)$.
\end{abstract}

\section{Introduction}

Let $L_0$ be a symmetric positive definite completely non-self-adjoint operator in a Hilbert space $\mH$. Consider a dynamical system with boundary control, $\alpha$,
\begin{align}
\label{alpha1}	&u(t)\in\Dom L_0^*, & t\gs0,\\
\label{}	&u''(t)+L_0^*u(t) = 0,  & t>0,\\
\label{}	&u(0)=u'(0)=0, & \\
\label{alpha4}	&\Gamma_1 u(t) = f(t), & t\gs0.
\end{align}
Here $\Gamma_1$ is the first of the two boundary operators \cite{MMM} of the so-called Vishik's boundary triple $(\mK;\Gamma_1,\Gamma_2)$, $\mK:=\Ker L_0^*$, $\dim\mK=n_{\pm}(L_0)$ (the defect indices), $\Gamma_1,\Gamma_2:\Dom L_0^*\to\mK$, $\Gamma_1:=L^{-1}L_0^*-I$, $\Gamma_2:=P_{\mK}L_0^*$ ($P_{\mK}$ is the projection on $\mK$ in $\mH$ and $L$ is the Friedrichs extension of $L_0$). The construction of this boundary triple is based on the Vishik's decomposition \cite{Vishik} $\Dom L_0^*=\Dom L_0\dot{+}\mK\dot{+}L^{-1}\mK$. The {\it boundary control} $f$ is a function from $\mathcal C^{\infty}([0,\infty);\mK)$ with $\supp f\subset(0,\infty)$. 
In the boundary control approach to inverse problems (the BC-method, \cite{1988,1997,
	2017,BD_DSBC}) one usually considers a family (a {\it nest}) of subsystems $\alpha^T$ of the system $\alpha$ on finite intervals $t\in[0,T]$. 
For each $T>0$ one is in particular interested in the {\it ``smooth waves''} which are solutions of $\alpha$ at the time $T$, $u^f(T)\in\mH$. 
Smooth waves form the linear manifold $\dot\mU^T$ which is called the {\it reachable set}, and its closure is called the {\it reachable subspace} $\mU^T$ of the system $\alpha^T$. 
Smooth controls form the class $\dot\mF^T:=\{f\in \mathcal C^{\infty}([0,T];\mK)\,|\,\supp f\subset(0,T]\}$, which is dense in the {\it outer space} $\mF^T:=\mathcal L_2([0,T];\mK)$ of $\alpha^T$. 
The space $\mH$ plays the role of the {\it inner space} of $\alpha^T$, and the correspondence between controls and solutions is established by the {\it control operator} $W^T:\mF^T\to\mH$ acting by the rule $f\mapsto u^f(T)$ on $\Dom W^T=\dot\mF^T$. 
Properties of the control operator are determined by the operator $L_0$ and play an important role in the BC-method. 
For example, for the problem of electromagnetic tomography $W^T$ is unbounded \cite{2017}, for the problems of acoustic tomography $W^T$ is bounded, but can be not invertible for $T$ large enough \cite{2017}, and for the Schr\"odinger operator on the half-line $W^T$ is an {\it isomorphism} from $\mF^T$ to $\mU^T$ (i.\,e., a bounded operator which has a bounded inverse from $\mU^T$ to $\mF^T$) \cite{BSim_A&A_2017}. 
The nest of projections on reachable subspaces $\{P_{\mU^T}\}_{T\in[0,\infty]}$ serves as a resolution of the identity for the {\it eikonal operator} $E=\int_{[0,\infty)}TdP_{\mU^T}$ in $\mH$. 
Passing to the spectral representation of this operator, we obtain the {\it model space} of functions where the operator $L_0$ has its own representation $\tilde L_0$, which is considered as a {\it functional model} of $L_0$ \cite{BSim_A&A_2024,BSim_ZNSP_2023}. 
From many possible spectral representations the one based on the {\it triangular factorization} \cite{GK} of the {\it connecting operator} $C^T:=(W^T)^*W^T$ with respect to the nest of delayed control subspaces $\{\mF^T_s\}_{s\in[0,T]}$, $\mF^T_s:=\mathcal L_2([T-s,T];\mK)$, plays a distinguished role, because it leads to a {\it local} functional model of $L_0$, which provides a way to solve inverse problems. 
It is based on finding the {\it diagonal} $D_{W^T}$ of the control operator \cite{BPush} with respect to the nest $\{\mF^T_s\}_{s\in[0,T]}$. 
This is why the properties of the control operator are very important in the BC-method.

In the present paper we consider the minimal matrix Schr\"odinger operator $L_0=-\dx+q(x)$ in the Hilbert space $\mH=\mathcal L_2([0,\infty);\mathbb C^n)$ with a locally summable Hermitian matrix-valued potential $q$ and study properties of the corresponding smooth waves and the control operator. For such $L_0$ we show that the system $\alpha^T$ takes a form of the system $\alpha^T\rmv$:
\begin{align}
\label{alpha_v1}&u(\cdot,t)\in\Dom L_0^*,&&t\in\ot,\\
\label{alpha_v2}&\ddot u(x,t)-u_{xx}(x,t)+q(x)u(x,t) = 0,  && x\in(0,\infty),t\in(0,T),\\
\label{alpha_v3}&u(x,0)=\dot u(x,0)=0, && x\in[0,\infty),\\
\label{alpha_v4}&u(0,t) = f\rmv(t), && t\in[0,T],
\end{align}
where $f\rmv\in\mF^T\rmv:=\mathcal L_2([0,T];\mathbb C^n)$ is simply related to the control $f$ so that its solution $u^{f\rmv}_{\alpha}(x,t)$ coincides with $u^f(x,t)$ (see \eqref{Lambda} below) and the differentiation has to be understood in the following sense: $\dot u$ as a derivative of an $\mH$-valued function in $t$ and $u_x$ as a partial derivative of a vector-valued function of two variables $x$ and $t$ with respec to to $x$. The system $\alpha^T\rmv$ is almost the same as the initial-boundary value problem $\beta^T\rmv$ for the telegraph equation
\begin{align}
\label{beta1}&u_{tt}(x,t)-u_{xx}(x,t)+q(x)u(x,t) = 0,  && x\in(0,\infty),t\in(0,T),\\
\label{}	&u(x,0)=u_t(x,0)=0, && x\in[0,\infty),\\
\label{beta3}	&u(0,t) = f\rmv(t), && t\in[0,T],
\end{align}
where $u_t$ denotes a partial derivative with respect to $t$. If the potential $q$ and the control $f\rmv$ are both smooth and the control vanishes near $0$, it is well known that the system \eqref{beta1}--\eqref{beta3} has the classical solution
\begin{equation}\label{u^f repres}
	u^{f\rmv}_{\beta}(x,t):=f\rmv(t-x)+\int_x^tw(x,s)f\rmv(t-s)ds,
\end{equation}
where $w$ is a smooth kernel. One can see that this solution also solves the system $\alpha^T\rmv$ and thus $W^Tf=u^f(T)=u^{f\rmv}_{\alpha}(T)=u^{f\rmv}_{\beta}(T)$ holds. This defines the corresponding control operator $W^T\rmv:f\rmv\mapsto u^{f\rmv}_{\alpha}(T)$, which is an isomorphism in $\mathcal L_2(\otbbc)$ (this is clear from \eqref{u^f repres}). Therefore $W^T$ (its closure) is an isomorphism from $\mF^T=\mathcal L_2(\otk)$ to $\mU^T=\mathcal L_2(\otbbc)$. For smooth contols $f(t)$ the solution $u^f(\cdot,T)$ is smooth, and if one considers the restiction of $W^T\rmv$ to $\mathcal H^2(\otbbc)$, this operator turns out to be an isomorphism in this space (in the norm of this space).

Note that in the general situation one has the abstract representation 
\begin{equation}\label{abstract representation for solution}
	u^f(t)=-f(t)+\int_0^tL^{-\frac12}\sin(L^{\frac12}(t-s))f''(s)ds
\end{equation}
for the solution of the system $\alpha^T$, but it does not help with establishing of the properties of the smooth waves discussed above. However, it helps to see that $u^f(t)$ is unique and smooth in $t$ as an $\mH$-valued function.

In the present work we consider potentials $q$ from the class $\mathcal L_{1,\rm loc}([0,\infty);\matr)$, where $\matr$ denotes square matrices of size $n$ with complex entries. 
In Theorem \ref{theorem solution} we show that, although the kernel $w(x,t)$ is only $\mathcal W^1_1$-smooth in $x$, the solution $u^{f\rmv}_{\alpha}(x,t)$ is $\mathcal W^2_1$-smooth in $x$ (and $\mathcal C^{\infty}$-smooth in $t$). 
The operator $W^T$ is closely related to the operator $W^T\rmv$, which is an isomorphism in $\mathcal L_2([0,T];\mathbb C^n)$. 
In Theorem \ref{theorem isomorphism} we show that if $q\in \mathcal L_2([0,T];\matr)$, then $W^T\rmv$ can be considered in the Sobolev space $\mathcal H^2([0,T];\mathbb C^n)$, is a bounded operator there and, moreover, such a restriction is an isomorphism in $\mathcal H^2([0,T];\mathbb C^n)$. 
These results will be used in the forthcoming paper \cite{forthcoming} on characterization of operators unitarily equivalent to matrix Schr\"odinger operators.

The paper is organized as follows. In Section \ref{section 2} we introduce the operator $L_0$ and derive the equivalent system $\alpha^T\rmv$ from the system $\alpha^T$. In Section \ref{section 3} we consider the initial-boundary value problem $\beta^T\rmv$ and prove a formula for its solution by analyzing a corresponding Goursat problem. In Section \ref{section 4} we relate the systems $\alpha^T\rmv$ and $\beta^T\rmv$ showing that solutions of the latter satisfy the former. In Section \ref{section 5} we study the ``smoothness'' properties of the operator $W^T\rmv$, i.\,e., its boundedness and invertibility in the Sobolev norm.

\section{The minimal Schr\"odinger operator}\label{section 2}

Let $q$ be a Hermitian matrix-valued function from the class $\mathcal L_{1,\rm loc}([0,\infty);\matr)$, and consider the minimal and the maximal matrix Schr\"odinger operators $L_{\rm min}$ and $L_{\rm max}$ in the Hilbert space $\mH=\mathcal L_2([0,\infty);\bbc)$ on the domains
\begin{align*}
&\Dom L_{\rm max}\\
&=\{y\in \mathcal L_2([0,\infty);\mathbb C^n)\cap \mathcal W^2_{1,\rm loc}([0,\infty);\mathbb C^n)\,|\,-y''+qy\in \mathcal L_2([0,\infty);\mathbb C^n)\},\\
&\Dom L_{\rm min}=\{y\in\Dom L_{\rm max}\,|\,\supp y\subset(0,\infty)\text{ is compact}\}.
\end{align*}
Let $L_0$ be the closure of $L_{\rm min}$, then $L_0^*=L_{\rm max}$.
Suppose that the potential $q$ is such that the operator $L_0$ is positive definite. Then by the Povzner--Wienholtz theorem \cite{Clark-Gesztesy} its defect indices are $n_{\pm}(L_0)=n$ and the domains of $L_0$ and its Friedrichs extension $L$ are as follows:
\begin{align*}
&\Dom L=\{y\in\Dom L_0^*\,|\,y(0)=0\},\\
&\Dom L_0=\{y\in\Dom L_0^*\,|\,y(0)=y'(0)=0\}.
\end{align*}
The kernel of $L_0^*$ consists of solutions to the eigenfunction equation for the zero spectral parameter,
\begin{equation*}
\mK=\Ker L_0^*=\{y\in\Dom L_0^*\,|\,-y''+qy=0\},\quad\dim\mK=n.
\end{equation*}
The Vishisk's decomposition $\Dom L_0^*=\Dom L_0\dot+L^{-1}\mK\dot+\mK=\Dom L\dot+\mK$ enables one to define the boundary triple $(\mK;\Gamma_1,\Gamma_2)$ with the boundary operators $\Gamma_1=L^{-1}L_0^*-I$, $\Gamma_2=P_{\mK}L_0^*$, which for $y=y_0+L^{-1}g+h\in\Dom L_0^*$, $y_0\in\Dom L_0$, $h,g\in\mK$, act as $\Gamma_1y=-h$, $\Gamma_2y=g$. Using $\Gamma_1$ we consider the dynamical system with boundary control $\alpha^T$, \eqref{alpha1}--\eqref{alpha4}, where $f\in\dot\mF^T$, i.\,e., $f\in \mathcal C^{\infty}([0,T];\mK)$ and $\supp f\subset(0,T]$. 

Let $K(x)$ be the matrix-valued solution to the equation $-y''+qy=0$ such that its columns $k_1$, ..., $k_n$ form a base in $\mK$ and $K(0)=I_{\matr}$. Such a solution exists, because for any base $k_1,...,k_n\in\mK$ one has $\det K(0)\neq0$: otherwise there exists a linear combination $k(x)=\sum_{i=1}^nc_ik_i(x)$ such that $k(0)=0$ and thus $k\in\Dom L\cap\mK$, but this intersection is trivial by the Vishik's decomposition.

To find $\Gamma_1$ and $\Gamma_2$ one needs to find $g$ and $h$ in the decomposition $y=y_0+L^{-1}g+h$. Since $h,g\in\mK$, let $g(x)=K(x)c$, $h(x)=K(x)d$ with $c,d\in\bbc$. Denote $K_1(x)=(L^{-1}K)(x)$, where the operator is applied to the matrix $K(x)$ column-wise. Since $y_0\in\Dom L_0$, one has $y_0(0)=0$ and $y_0'(0)=0$, hence
\begin{equation*}
y(0)=h(0)=K(0)d=d,\quad y'(0)=K_1'(0)c+K'(0)d,
\end{equation*}
and
\begin{align*}
&(\Gamma_1y)(x)=-h(x)=-K(x)d=-K(x)y(0),\\
&(\Gamma_2y)(x)=g(x)=K(x)c=K(x)(K_1'(0))^{-1}(y'(0)-K'(0)y(0)).
\end{align*}
Here the matrix $K_1'(0)$ is invertible, because if $\det K_1'(0)=0$, then there exists a non-trivial linear combination $k(x)=\sum_{i=1}^nc_ik_i(x)$ such that $(L^{-1}k)'(0)=0$, and together with the fact that $(L^{-1}k)(0)=0$ this means that $L^{-1}k\in\Dom L_0\cap(L^{-1}\mK)=\{0\}$, but this intersection is again trivial by the Vishik's decomposition.

Define the isomorphism $\lambda:\bbc\to\mK$, $(\lambda v)(x)=-K(x)v$, and the corresponding isomorphism $\Lambda^T:\mathcal L_2(\otbbc)\to \mathcal L_2(\otk)$,
\begin{equation*}
(\Lambda^T f\rmv)(t):=\lambda(f\rmv(t)),\quad t\in\ot.
\end{equation*}
Parametrizing 
\begin{equation}\label{Lambda}
f=\Lambda^T f\rmv,
\end{equation}
we rewrite the system $\alpha^T$, \eqref{alpha1}--\eqref{alpha4}, in the equivalent form $\alpha^T\rmv$, \eqref{alpha_v1}--\eqref{alpha_v4}, and the control operators $W^T$ and $W^T\rmv$ are related as
\begin{equation}\label{relation btw W}
	W^T\rmv=W^T\Lambda^T.
\end{equation}

\section{The initial-boundary value problem $\beta^T\rmv$}\label{section 3}

In this section we establish properties of the solution to the system $\beta^T\rmv$, \eqref{beta1}--\eqref{beta3}. We show that there exists a continuous matrix-valued function $w(x,t)$, defined for $0\leqslant x\leqslant t<\infty$, such that the function
\begin{equation}\label{u hat f}
u^{f\rmv}_{\beta}(x,t)=f\rmv(t-x)+\int_x^tw(x,s)f\rmv(t-s)ds,\quad x\in[0,\infty),\ t\in[0,T],
\end{equation}
under the agreement that $f\rmv$ is continued to the negative half-line by zero, is the solution of the system $\beta^T\rmv$. 

The kernel $w$ is a generalized solution of the following Goursat problem:
\begin{align}
\label{w1}&w_{tt}(x,t)-w_{xx}(x,t)+q(x)w(x,t)=0, && t>0,\  x\in(0,t),\\
&w(0,t)=0,  && t\geqslant0,\\
\label{w3}&w(x,x)=-\frac12\int_0^xq(s)ds, && x\geqslant0,
\end{align}
in the sense that it is the solution of the corresponding integral equation. In what follows we derive this equation in the same way as it was done in \cite{Avdonin-Mikhailov-2010} and show that it has the unique continuous solution. Then we prove that $u^{f\rmv}_{\beta}$ defined by \eqref{u hat f} satisfies \eqref{beta1}--\eqref{beta3}.

To reduce the Goursat problem to an integral equation let us change the variables,
$$
\xi=t-x,\quad \eta=t+x,\quad v(\xi,\eta):=w\left(\frac{\eta-\xi}{2},\frac{\eta+\xi}{2}\right).
$$
Then \eqref{w1}--\eqref{w3} becomes
\begin{align}
\label{v1}&v_{\xi\eta}(\xi,\eta)+q\left(\frac{\eta-\xi}{2}\right)\frac{v(\xi,\eta)}4=0, && \eta>0,\ \xi\in(0,\eta),\\
\label{v2}&v(\xi,\xi)=0,  && \xi\geqslant0,\\
\label{v3}&v(0,\eta)=-\frac12\int_0^{\frac{\eta}{2}}q(s)ds, && \eta\geqslant0.
\end{align}
Formally solving the inhomogeneous differential equation $v_{\xi\eta}(\xi,\eta)=g(\xi,\eta):=-q\left(\frac{\eta-\xi}{2}\right)\frac{v(\xi,\eta)}4$ gives $v(\xi,\eta)=\int_0^{\xi}d\xi_1\int_0^{\eta}d\eta_1g(\xi_1,\eta_1)+v_1(\xi)+v_2(\eta)$ with some functions $v_1$ and $v_2$ that can be determined from \eqref{v2} and \eqref{v3}. This leads to the integral equation
\begin{equation}\label{v int eq}
v(\xi,\eta)=-\frac12\int_{\frac{\xi}{2}}^{\frac{\eta}{2}}q(s)ds-\frac14\int_0^{\xi}d\xi_1\int_{\xi}^{\eta}d\eta_1q\left(\frac{\eta_1-\xi_1}{2}\right)v(\xi_1,\eta_1).
\end{equation}
Denote
\begin{equation*}
v_0(\xi,\eta):=-\frac12\int_{\frac{\xi}{2}}^{\frac{\eta}{2}}q(s)ds,
\quad
\vt:=Vv,
\end{equation*}
where $V$ is the operator acting as
\begin{equation*}
(Vv)(\xi,\eta):=-\frac14\int_0^{\xi}d\xi_1\int_{\xi}^{\eta}d\eta_1q\left(\frac{\eta_1-\xi_1}{2}\right)v(\xi_1,\eta_1).
\end{equation*}
Then the integral equation \eqref{v int eq} can be written as $v=v_0+Vv$ and its solution is $v=(I-V)^{-1}v_0=\sum_{n=0}^{\infty}V^nv_0$, if one shows that the series converge in a suitable sense. Let us fix $T>0$ and consider it in the space $\mathcal C(\{(\xi,\eta)\,|\,\eta\in[0,T],\xi\in[0,\eta]\})$. One has
\begin{equation*}
\|v_0(\xi,\eta)\|_{\matr}\leqslant\frac12\int_0^{\frac{\eta}{2}}\|q(s)\|_{\matr}ds=:S(\eta),\quad \forall\xi\geqslant0,
\end{equation*}
\begin{multline*}
\|(Vv_0)(\xi,\eta)\|_{\matr}
\leqslant
\frac14\int_0^{\xi}d\xi_1\int_{\xi}^{\eta}d\eta_1S(\eta_1)\left\|q\left(\frac{\eta_1-\xi_1}2\right)\right\|_{\matr}
\\
=
S(\eta)\int_0^{\xi}d\xi_1\int_{\xi}^{\eta}d\eta_1S'(\eta_1-\xi_1)\leqslant S^2(\eta)\xi,
\end{multline*}
analagously
\begin{multline*}
\|(V^2v_0)(\xi,\eta)\|_{\matr}
\leqslant
\frac14\int_0^{\xi}d\xi_1\int_{\xi}^{\eta}d\eta_1S^2(\eta_1)\xi_1\left\|q\left(\frac{\eta_1-\xi_1}2\right)\right\|_{\matr}
\\
=
S^2(\eta)\int_0^{\xi}d\xi_1\xi_1\int_{\xi}^{\eta}d\eta_1S'(\eta_1-\xi_1)\leqslant \frac{S^3(\eta)\xi^2}2,
\end{multline*}
and, by induction,
$\|(V^nv_0)(\xi,\eta)\|_{\matr}\leqslant\frac{S^{n+1}(\eta)\xi^n}{n!}$.
Therefore the series converge and the resulting solution $v$ belongs to  $\mathcal C(\{(\xi,\eta)\,|\,\eta\in[0,T],\xi\in[0,\eta]\})$ for every $T>0$ and satisfies the estimate
\begin{equation*}
\|v(\xi,\eta)\|_{\matr}\leqslant S(\eta)e^{\xi S(\eta)}.
\end{equation*}
Looking at the equality \eqref{v int eq}, we see that $v$ is differentiable and that
\begin{equation*}
v_{\xi}(\xi,\eta)=\frac14q\left(\frac{\xi}2\right)
-\frac14\int_{\xi}^{\eta}q\left(\frac{\eta_1-\xi}2\right)v(\xi,\eta_1)d\eta_1
+\frac14\int_0^{\xi}q\left(\frac{\xi-\xi_1}2\right)v(\xi_1,\xi)d\xi_1,
\end{equation*}
\begin{equation*}
v_{\eta}(\xi,\eta)=-\frac14q\left(\frac{\eta}2\right)
-\frac14\int_0^{\xi}q\left(\frac{\eta-\xi_1}2\right)v(\xi_1,\eta)d\xi_1.
\end{equation*}
Moreover,
\begin{equation*}
v_{\xi\eta}(\xi,\eta)=-\frac14q\left(\frac{\eta-\xi}{2}\right)v(\xi,\eta)
\end{equation*}
holds for fixed $\xi$ and a.\,e. $\eta$ and for fixed $\eta$ and a.\,e. $\xi$, which gives \eqref{v1}. Conditions \eqref{v2} and \eqref{v3} are obviously satisfied, so $v$ solves the problem \eqref{v1}--\eqref{v3}. Note that the derivatives $v_{\xi\xi}$ and $v_{\eta\eta}$ do not exist.

Solution $v$ is a sum of the explicit part $v_0$ which is only differentiable and the part $\vt$ which has two derivatives, as we show now. One has
\begin{equation*}
v_{0_{\xi}}(\xi,\eta)=\frac14q\left(\frac{\xi}2\right),
\quad 
v_{0_{\eta}}(\xi,\eta)=-\frac14q\left(\frac{\eta}2\right),
\end{equation*}
\begin{multline}\label{v xi}
\vt_{\xi}(\xi,\eta)=-\frac14\int_{\xi}^{\eta}q\left(\frac{\eta_1-\xi}2\right)v(\xi,\eta_1)d\eta_1
+\frac14\int_0^{\xi}q\left(\frac{\xi-\xi_1}2\right)v(\xi_1,\xi)d\xi_1
\\
=-\frac12\int_0^{\frac{\eta-\xi}{2}}q(\tau)v(\xi,\xi+2\tau)d\tau+\frac12\int_0^{\frac{\xi}2}q(\tau)v(\xi-2\tau,\xi)d\tau
,
\end{multline}
\begin{equation}\label{v eta}
\vt_{\eta}(\xi,\eta)=-\frac14\int_0^{\xi}q\left(\frac{\eta-\xi_1}2\right)v(\xi_1,\eta)d\xi_1
=-\frac12\int_{\frac{\eta-\xi}2}^{\frac{\eta}2}q(\tau)v(\eta-2\tau,\eta)d\tau.
\end{equation}
Taking $v$ from the equation \eqref{v int eq} we get:
\begin{multline*}
\frac{d}{d\xi}v(\xi,\xi+2\tau)=\frac14\left(q\left(\frac{\xi}2\right)-q\left(\frac{\xi}2+\tau\right)\right)
\\
+\frac12\left(\int_0^{\frac{\xi}2}q(\sigma)v(\xi-2\sigma,\xi)d\sigma-\int_0^{\tau}q(\sigma)v(\xi,\xi+2\sigma)d\sigma\right.
\\
\left.-\int_{\tau}^{\tau+\frac{\xi}2}q(\sigma)v(\xi+2\tau-2\sigma,\xi+2\tau)d\sigma\right),
\end{multline*}
\begin{multline*}
\frac{d}{d\eta}v(\eta-2\tau,\eta)
=\frac14\left(q\left(\frac{\eta}2-\tau\right)-q\left(\frac{\eta}2\right)\right)
\\
+\frac12\left(\int_0^{\frac{\eta}2-\tau}q(\sigma)v(\eta-2\tau-2\sigma,\eta-2\tau)d\sigma-\int_0^{\tau}q(\sigma)v(\eta-2\tau,\eta-2\tau+2\sigma)d\sigma\right.
\\
\left.-\int_{\tau}^{\frac{\eta}2}q(\sigma)v(\eta-2\sigma,\eta)d\sigma\right).
\end{multline*}
Substitution to \eqref{v xi} and \eqref{v eta} gives:
\begin{multline}\label{v-xi-xi}
\vt_{\xi\xi}(\xi,\eta)=\frac{q((\eta-\xi)/2)v(\xi,\eta)+q(\xi/2)v(0,\xi)}4
\\
+\frac18\left(\int_0^{\frac{\eta-\xi}2}q(\tau)(q(\xi/2+\tau)-q(\xi/2))d\tau+\int_0^{\frac{\xi}2}q(\tau)(q(\xi/2-\tau)-q(\xi/2))d\tau\right)
\\
+\frac14\left(\int_0^{\frac{\eta-\xi}2}d\tau q(\tau)
\left(
\int_0^{\tau}d\sigma q(\sigma)v(\xi,\xi+2\sigma)-\int_0^{\frac{\xi}2}d\sigma q(\sigma)v(\xi-2\sigma,\xi)
\right.
\right.
\\
\left.
\left.
+\int_{\tau}^{\tau+\frac{\xi}2}d\sigma q(\sigma)v(\xi+2\tau-2\sigma,\xi+2\tau)\right)
\right.
\\
+\int_0^{\frac{\xi}2}d\tau q(\tau)\left(
\int_0^{\frac{\xi}2-\tau}d\sigma q(\sigma)v(\xi-2\tau-2\sigma,\xi-2\tau)
\right.
\\
\left.
\left.
-\int_0^{\tau}d\sigma q(\sigma)v(\xi-2\tau,\xi-2\tau+2\sigma)
-\int_{\tau}^{\frac{\xi}2}d\sigma q(\sigma)v(\xi-2\sigma,\xi)
\right)
\right),
\end{multline}
\begin{multline}\label{v-eta-eta}
\vt_{\eta\eta}(\xi,\eta)=\frac{q((\eta-\xi)/2)v(\xi,\eta)-q(\eta/2)v(0,\eta)}4
\\
+\frac18\int_{\frac{\eta-\xi}2}^{\frac{\eta}2}q(\tau)(q(\eta/2)-q(\eta/2-\tau))d\tau
\\
+\frac14\left(\int_{\frac{\eta-\xi}2}^{\frac{\eta}2}d\tau q(\tau)
\left(
\int_0^{\tau}d\sigma q(\sigma)v(\eta-2\tau,\eta-2\tau+2\sigma)
\right.
\right.
\\
\left.
\left.
-\int_0^{\frac{\eta}2-\tau}d\sigma q(\sigma)v(\eta-2\tau-2\sigma,\eta-2\tau)
+\int_{\tau}^{\frac{\eta}2}d\sigma q(\sigma)v(\eta-2\sigma,\eta)\right)
\right),
\end{multline}
and
\begin{equation*}
\vt_{\xi\eta}(\xi,\eta)=-\frac{q((\eta-\xi)/2)v(\xi,\eta)}4.
\end{equation*}

Returning to the function $w$ we can accordingly write it in the form $w=w_0+\wt$ with
\begin{equation*}
w_0(x,t)=-\frac12\int_{\frac{t-x}2}^{\frac{t+x}2}q(s)ds.
\end{equation*}
For the second summand $\wt$ let us find the second order derivatives
\begin{multline*}
\wt_{tt}((\eta-\xi)/2,(\eta+\xi)/2)=\vt_{\xi\xi}(\xi,\eta)+\vt_{\eta\eta}(\xi,\eta)+2\vt_{\xi\eta}(\xi,\eta)
\\
=\frac{q(\xi/2)v(0,\xi)-q(\eta/2)v(0,\eta)}4
\\
+\frac18\left(
q(\eta/2)\int_{\frac{\eta-\xi}2}^{\frac{\eta}2}q(\tau)d\tau
-q(\xi/2)\int_0^{\frac{\eta-\xi}2}q(\tau)d\tau-q(\xi/2)\int_0^{\frac{\xi}2}q(\tau)d\tau
\right.
\\
\left.
+
\int_0^{\frac{\eta-\xi}2}q(\tau)q(\xi/2+\tau)d\tau+\int_0^{\frac{\xi}2}q(\tau)q(\xi/2-\tau)d\tau
-\int_{\frac{\eta-\xi}2}^{\frac{\eta}2}q(\tau)q(\eta/2-\tau)d\tau\right)
\\
+\widehat w((\eta-\xi)/2,(\eta+\xi)/2),
\end{multline*}
where the function $\widehat w$ contains all the remaining integral terms and is continuous. Returning to the original variables we have
\begin{multline}\label{w-tt}
\wt_{tt}(x,t)
=\frac14q\left(\frac{t-x}2\right)w\left(\frac{t-x}2,\frac{t-x}2\right)
-\frac14q\left(\frac{t+x}2\right)w\left(\frac{t+x}2,\frac{t+x}2\right)
\\
+\frac18\left(
q\left(\frac{t+x}2\right)\int_{t}^{\frac{t+x}2}q(\tau)d\tau
-q\left(\frac{t-x}2\right)\int_0^{t}q(\tau)d\tau
\right.
\\
\left.
-q\left(\frac{t-x}2\right)\int_0^{\frac{t-x}2}q(\tau)d\tau
+\int_0^xq(\tau)q\left(\frac{t-x}2+\tau\right)d\tau
\right.
\\
\left.
+\int_0^{\frac{t-x}2}q(\tau)q\left(\frac{t-x}2-\tau\right)d\tau
-\int_x^{\frac{t+x}2}q(\tau)q\left(\frac{t+x}2-\tau\right)d\tau
\right)
+\widehat w(x,t).
\end{multline}
At the same time, using \eqref{v2},
\begin{multline*}
\wt_{xx}((\eta-\xi)/2,(\eta+\xi)/2)=\vt_{\xi\xi}(\xi,\eta)+\vt_{\eta\eta}(\xi,\eta)-2\vt_{\xi\eta}(\xi,\eta)
\\
=\wt_{tt}((\eta-\xi)/2,(\eta+\xi)/2)-4\vt_{\xi\eta}(\xi,\eta)
\\
=\wt_{tt}((\eta-\xi)/2,(\eta+\xi)/2)+q((\eta-\xi)/2)v(\xi,\eta),
\end{multline*}
and thus
\begin{equation*}
\wt_{xx}(x,t)=\wt_{tt}(x,t)+q(x)w(x,t).
\end{equation*}

The following lemma is a consequence of the Fubini's theorem.

\begin{Lemma}\label{lemma convolution}
If $q\in \mathcal L_{1,\rm loc}([0,\infty);\matr)$, then the integrals $\int_0^{\frac{t-x}2}q(\tau)q\left(\frac{t-x}2-\tau\right)d\tau$, $\int_x^{\frac{t+x}2}q(\tau)q\left(\frac{t+x}2-\tau\right)d\tau$ and $\int_0^xq(\tau)q\left(\frac{t-x}2+\tau\right)d\tau$ as functions of $x$ and $t$ are locally summable in both variables: for every $t>0$ they belong to $\mathcal L_1([0,t];\matr)$ as functions of $x$, and for every $x>0$ to $\mathcal L_{1,\rm loc}([x,\infty);\matr)$ as functions of $t$.
\end{Lemma}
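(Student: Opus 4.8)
The plan is to deduce all three assertions from Tonelli's theorem. Denote the three integrals by $F_1(x,t)$, $F_2(x,t)$ and $F_3(x,t)$, in the order listed. Each has the shape $\int_a^b q(\tau)\,q(c\pm\tau)\,d\tau$, where the endpoints $a,b$ and the shift $c$ are affine functions of $(x,t)$ and the evaluation takes place in the region $0\leqslant x\leqslant t$; in particular $F_1(x,t)=(q*q)\big((t-x)/2\big)$ is the matrix convolution of $q$ with itself at an affine argument. First I would record the elementary estimate, valid for every $R>0$,
\begin{equation*}
\int_0^R\!\int_0^a\|q(\tau)\|_{\matr}\,\|q(a-\tau)\|_{\matr}\,d\tau\,da=\int_0^R\|q(\tau)\|_{\matr}\!\int_0^{R-\tau}\|q(\sigma)\|_{\matr}\,d\sigma\,d\tau\leqslant\Big(\int_0^R\|q(\sigma)\|_{\matr}\,d\sigma\Big)^2<\infty,
\end{equation*}
which already shows $q*q\in\mathcal L_{1,\rm loc}$ and, via Fubini, that $F_1$ is defined almost everywhere and is jointly measurable. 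The same nonnegative-integrand Tonelli/Fubini reasoning gives the joint measurability and a.e.\ definedness of $F_2$ and $F_3$.

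For a fixed $t>0$ I would prove $\mathcal L_1$-summability in $x$ on $[0,t]$. Bounding the matrix norm of each integral by the integral of the norms and integrating in $x$, I apply Tonelli to exchange the $\tau$- and $x$-integrations over the (bounded, explicitly described) triangular region cut out by the affine endpoints. In the resulting inner integral I perform the affine substitution $\sigma=c(x,t)\mp\tau$ (for instance $\sigma=(t-x)/2-\tau$ for $F_1$ and $\sigma=(t-x)/2+\tau$ for $F_3$); its Jacobian is the constant $2$ and it carries the $x$-segment onto a subinterval of $[0,t]$, so the inner factor is at most $2\int_0^t\|q(\sigma)\|_{\matr}\,d\sigma$. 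This yields $\int_0^t\|F_i(x,t)\|_{\matr}\,dx\leqslant 2\big(\int_0^t\|q(\sigma)\|_{\matr}\,d\sigma\big)^2$, which is finite for \emph{every} $t$ because local summability makes $\int_0^t\|q\|_{\matr}$ finite; hence $F_i(\cdot,t)\in\mathcal L_1([0,t];\matr)$.

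For a fixed $x>0$ and arbitrary $R>x$ I would argue symmetrically for summability in $t$ on $[x,R]$. The same Tonelli exchange, integrating $t$ over $[x,R]$ and substituting the analogous affine variable (whose range now lands in a bounded interval determined by $x$ and $R$), produces a bound which is a product of two integrals of $\|q\|_{\matr}$ over bounded intervals and is therefore finite for every $x$ and every $R$; this gives $F_i(x,\cdot)\in\mathcal L_{1,\rm loc}([x,\infty);\matr)$.

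I expect the only genuinely delicate point to be the bookkeeping in the change of variables for the two ``correlation-type'' integrals $F_2$ and $F_3$: one must invoke the constraint $0\leqslant x\leqslant t$ to verify that the shifted arguments $(t\pm x)/2\pm\tau$ remain nonnegative and bounded (by $t$ in the first estimate, and by a constant depending on $x,R$ in the second) throughout the integration region, so that each inner factor is genuinely dominated by an integral of $\|q\|_{\matr}$ over a compact interval. This is precisely where the hypothesis $0\leqslant x\leqslant t$ enters, and it is the only step that is not a verbatim repetition of the convolution estimate for $F_1$.
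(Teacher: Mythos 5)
Your proposal is correct and follows essentially the same route as the paper: reduce the first integral to the convolution $p=q*q$ and show $p\in\mathcal L_{1,\rm loc}$, then handle the remaining integrals by a Tonelli exchange of the order of integration followed by an affine substitution of constant Jacobian, checking that the shifted argument stays in a compact interval. The only cosmetic difference is that the paper first splits the second integral as $p\left(\frac{t+x}2\right)-\int_0^xq(\tau)q\left(\frac{t+x}2-\tau\right)d\tau$ to reuse the convolution estimate, whereas you estimate it directly; both yield the same bounds.
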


\begin{proof}
The first integral equals $p((t-x)/2)$, where 
\begin{equation*}
p(x):=\int_0^xq(\tau)q(x-\tau)d\tau.
\end{equation*}
Take $T>0$. Then
\begin{multline*}
\int_0^T\|p(x)\|_{\matr}dx
\leqslant 
\int_0^Tdx\int_0^xd\tau \|q(\tau)\|\|q(x-\tau)\|
\\
=
\int_0^Td\tau\|q(\tau)\|\int_{\tau}^Tdx\|q(x-\tau)\|
=
\int_0^Td\tau\|q(\tau)\|\int_0^{T-\tau}d\sigma\|q(\sigma)\|
\\
\leqslant
\int_0^T\|q(\tau)\|d\tau\int_0^T\|q(\sigma)\|d\sigma
<
\infty,
\end{multline*}
which justifies the change of the order of integration. Hence $p\in \mathcal L_{1,\rm loc}([0,\infty);\matr)$, and the assertion for the first integral follows.

The second integral can be written as the difference of 
$$
\int_0^{\frac{t+x}2}q(\tau)q\left(\frac{t+x}2-\tau\right)d\tau=p\left(\frac{t+x}2\right)
$$ 
and $\int_0^xq(\tau)q\left(\frac{t+x}2-\tau\right)d\tau$. It therefore remains to consider $\int_0^xq(\tau)q\left(\frac{t+x}2-\tau\right)d\tau$ and $\int_0^xq(\tau)q\left(\frac{t-x}2+\tau\right)d\tau$. For every $t\geqslant0$ one has
\begin{multline*}
\int_0^tdx\int_0^xd\tau\|q(\tau)\|\left\|q\left(\frac{t+x}2-\tau\right)\right\|
=
\int_0^td\tau\|q(\tau)\|\int_{\tau}^tdx\left\|q\left(\frac{t+x}2-\tau\right)\right\|
\\
=2\int_0^td\tau\|q(\tau)\|\int_{\frac{t-\tau}2}^{t-\tau}d\sigma\|q(\sigma)\|
\leqslant
2\int_0^t\|q(\tau)\|d\tau\int_0^t\|q(\sigma)\|d\sigma<\infty,
\end{multline*}
\begin{multline*}
\int_0^tdx\int_0^xd\tau\|q(\tau)\|\left\|q\left(\frac{t-x}2+\tau\right)\right\|
=
\int_0^td\tau\|q(\tau)\|\int_{\tau}^tdx\left\|q\left(\frac{t-x}2+\tau\right)\right\|
\\
=2\int_0^td\tau\|q(\tau)\|\int_{\tau}^{\frac{t+\tau}2}d\sigma\|q(\sigma)\|
\leqslant
2\int_0^t\|q(\tau)\|d\tau\int_0^t\|q(\sigma)\|d\sigma<\infty.
\end{multline*}
For every $x\geqslant0$ and $T>x$ one has:
\begin{multline*}
\int_x^Tdt\int_0^xd\tau\|q(\tau)\|\left\|q\left(\frac{t+x}2-\tau\right)\right\|
=
\int_0^xd\tau\|q(\tau)\|\int_x^Tdt\left\|q\left(\frac{t+x}2-\tau\right)\right\|
\\
=2\int_0^xd\tau\|q(\tau)\|\int_{x-\tau}^{\frac{T+x}2-\tau}d\sigma\|q(\sigma)\|
\leqslant
2\int_0^x\|q(\tau)\|d\tau\int_0^{\frac{T+x}2}\|q(\sigma)\|d\sigma<\infty,
\end{multline*}
\begin{multline*}
\int_x^Tdt\int_0^xd\tau\|q(\tau)\|\left\|q\left(\frac{t-x}2+\tau\right)\right\|
=
\int_0^xd\tau\|q(\tau)\|\int_x^Tdt\left\|q\left(\frac{t-x}2+\tau\right)\right\|
\\
=2\int_0^xd\tau\|q(\tau)\|\int_{\tau}^{\frac{T-x}2+\tau}d\sigma\|q(\sigma)\|
\leqslant
2\int_0^x\|q(\tau)\|d\tau\int_0^{\frac{T+x}2}\|q(\sigma)\|d\sigma<\infty.
\end{multline*}
From this the assertion for the second and the third integrals follows, which completes the proof.
\end{proof}

We have proven the following result.

\begin{Lemma}
Let $q\in \mathcal L_{1,\rm loc}([0,\infty);\matr)$. Then
\\
1. For every $t\geqslant0$ one has $\wt_{tt}(\cdot,t)\in \mathcal L_1([0,t];\matr)$.
\\
2. For every $x\geqslant0$ one has $\wt_{tt}(x,\cdot)\in \mathcal L_{1,\rm loc}([x,\infty);\matr)$.
\\
3. For every $t\geqslant0$ the equation $\wt_{tt}(x,t)-\wt_{xx}(x,t)+q(x)w(x,t)=0$ holds for a.\,e. $x\in[0,t]$. 
\\
4. If a representative of the equivalence class $q$ is chosen, then for a.\,e. $x\geqslant0$ the equation $\wt_{tt}(x,t)-\wt_{xx}(x,t)+q(x)w(x,t)=0$ holds for a.\,e. $t\in[x,\infty)$.
\end{Lemma}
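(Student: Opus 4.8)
The plan is to read off all four assertions from the explicit expression \eqref{w-tt} for $\wt_{tt}$, together with the relation $\wt_{xx}=\wt_{tt}+q(x)w$ already derived above and Lemma \ref{lemma convolution}. No genuinely new analysis is needed beyond careful bookkeeping of integrability and of the a.e.\ quantifiers, which is why this is stated as a corollary of the preceding computations.

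For parts 1 and 2 I would go through the right-hand side of \eqref{w-tt} summand by summand and sort the terms into three groups. The first group consists of the diagonal boundary terms $q(\frac{t\mp x}2)w(\frac{t\mp x}2,\frac{t\mp x}2)$ together with the products $q(\frac{t\mp x}2)\int(\cdots)q(\tau)\,d\tau$: in each of these a reparametrized copy of $q$ is multiplied by a continuous function, since the diagonal value $w(x,x)=-\frac12\int_0^xq$ is continuous, as are the antiderivatives $\int_0^tq$, $\int_0^{(t-x)/2}q$ and $\int_t^{(t+x)/2}q$. Multiplication by a locally bounded continuous factor preserves local summability, so it remains only to check that $x\mapsto q(\frac{t\pm x}2)$ lies in $\mathcal L_1([0,t])$ for fixed $t$ and that $t\mapsto q(\frac{t\pm x}2)$ lies in $\mathcal L_{1,\rm loc}([x,\infty))$ for fixed $x$; both follow from the affine substitution in the argument of $q$, which turns these into rescaled restrictions of $q\in\mathcal L_{1,\rm loc}$. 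The second group is the three convolution integrals, whose summability in each variable in exactly the required senses is the content of Lemma \ref{lemma convolution}. The third group is the single remainder $\widehat w$, which is continuous and hence summable on every compact set. Adding the three groups yields assertions 1 and 2.

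For parts 3 and 4 I would invoke the identity $\wt_{xx}(x,t)=\wt_{tt}(x,t)+q(x)w(x,t)$ established above, which rearranges to the claimed equation $\wt_{tt}-\wt_{xx}+q(x)w=0$; the only substantive point is the sense of the a.e.\ quantifiers. Tracing back the change of variables $\xi=t-x$, $\eta=t+x$, this identity comes from $\wt_{tt}-\wt_{xx}=4\vt_{\xi\eta}=-q(\frac{\eta-\xi}2)v(\xi,\eta)$, and the mixed derivative $\vt_{\xi\eta}$ was shown to equal $-\frac14 q(\frac{\eta-\xi}2)v(\xi,\eta)$ for fixed $\xi$ and a.e.\ $\eta$, and for fixed $\eta$ and a.e.\ $\xi$. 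Fixing $t$ (part 3) means moving along the antidiagonal $\xi+\eta=2t$; the exceptional set of $x$ is null because it is governed by the points where the singular values $q(\frac{t\mp x}2)$ appearing in \eqref{w-tt} fail to be defined, and these pull back to a null set in $x$ under the affine maps. Fixing $x$ (part 4) means moving along the diagonal $\eta-\xi=2x$; here the factor $q(x)$ in $q(x)w(x,t)$ is a genuine pointwise value, which is why one must first fix a representative of the class $q$, after which a Tonelli argument applied to the two-dimensional null set on which \eqref{w-tt} fails shows that for a.e.\ $x$ the formula is valid for a.e.\ $t\in[x,\infty)$.

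I expect the main difficulty to be measure-theoretic rather than analytic: not the existence of the derivatives, which was already settled in deriving \eqref{w-tt}, but the clean separation of quantifiers in parts 3 and 4 — in particular the passage from the ``horizontal/vertical'' a.e.\ statements for $\vt_{\xi\xi}$, $\vt_{\eta\eta}$, $\vt_{\xi\eta}$ to ``diagonal/antidiagonal'' a.e.\ statements, and the role played by the chosen representative of $q$ in part 4. Everything else reduces to the routine term-by-term check of \eqref{w-tt} backed by Lemma \ref{lemma convolution}.
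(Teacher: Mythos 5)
Your proposal is correct and follows essentially the same route as the paper, which offers no separate proof but presents the lemma as already established by the explicit formula \eqref{w-tt}, the identity $\wt_{xx}=\wt_{tt}+q(x)w$, and Lemma \ref{lemma convolution}. Your term-by-term sorting of \eqref{w-tt} and your attention to the a.e.\ quantifiers in parts 3 and 4 simply make explicit what the paper leaves implicit.
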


\begin{Remark}
The equation $w_{tt}(x,t)-w_{xx}(x,t)+q(x)w(x,t)=0$ can hold nowhere in the sense of an equality of partial derivatives.
\end{Remark}

\begin{Theorem}\label{theorem solution}
Let $q\in \mathcal L_{1,\rm loc}([0,\infty);\matr)$, $f\rmv\in \mathcal C^{\infty}_{\rm loc}([0,\infty);\mathbb C^n)$ and $\supp f\rmv\subset(0,\infty)$. Then the function
\begin{equation}\label{formula for u-f-hat}
u^{f\rmv}(x,t)=f\rmv(t-x)+\int_x^tw(x,s)f\rmv(t-s)ds
\end{equation}
solves the system
\begin{align}
\label{u1}& u_{tt}(x,t)-u_{xx}(x,t)+q(x)u(x,t) = 0,  && t>0,\text{ a.\,e. }x\in(0,t),\\
\label{u2}& u(x,t)=0, && x\gs t,\\
\label{u3}& u(0,t) = f\rmv(t), && t\gs 0.
\end{align}
Besides that one has 
$$
u^{f\rmv}(\cdot,t)\in \mathcal W^2_1([0,t];\mathbb C^n),\quad -u^{f\rmv}_{xx}(\cdot,t)+q(\cdot)u^{f\rmv}(\cdot,t)\in \mathcal C([0,t];\mathbb C^n)
$$ 
for every $t\geqslant0$.
\end{Theorem}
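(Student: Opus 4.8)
The plan is to verify the two boundary/support conditions and the interior equation, and separately to establish the two regularity assertions, handling the $t$- and $x$-directions by entirely different arguments. Conditions \eqref{u2} and \eqref{u3} are immediate from \eqref{formula for u-f-hat}: for $x\geqslant t$ one has $t-x\leqslant0$, so the travelling wave $f\rmv(t-x)$ vanishes, and $t-s\leqslant0$ throughout the range of integration, so the integrand vanishes as well; while at $x=0$ the Goursat boundary value $w(0,\cdot)=0$ leaves only $f\rmv(t)$. For the regularity in $t$ I would differentiate \eqref{formula for u-f-hat} twice under the integral sign, which is legitimate because $w$ is continuous and $f\rmv$ is smooth. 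Since $\supp f\rmv\subset(0,\infty)$ forces $f\rmv(0)=f\rmv'(0)=0$, the boundary terms generated at $s=t$ drop out and one gets $u^{f\rmv}_{tt}(x,t)=f\rmv''(t-x)+\int_x^t w(x,s)f\rmv''(t-s)\,ds$, jointly continuous in $(x,t)$. This already identifies the object in the last assertion: once \eqref{u1} is known it reads $-u^{f\rmv}_{xx}+q\,u^{f\rmv}=-u^{f\rmv}_{tt}$, whose right-hand side is continuous.

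The heart of the matter is the second derivative in $x$, and here the decomposition $w=w_0+\wt$ is indispensable, since $w_{xx}$ does not exist (cf.\ the Remark) whereas $\wt_{xx}$ does. Writing $\tilde u(x,t):=\int_x^t\wt(x,s)f\rmv(t-s)\,ds$, I would differentiate twice under the integral; the second differentiation is admissible because $\wt_{xx}(\cdot,s)$ is locally summable uniformly in $s$, which follows from the explicit formulas \eqref{v-xi-xi}--\eqref{v-eta-eta}, the relation $\wt_{xx}=\wt_{tt}+q\,w$, and Lemma \ref{lemma convolution}. This yields a boundary term $-\wt_x(x,x)f\rmv(t-x)$ together with an $\mathcal L_1$ bulk term.

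The hard part will be the corresponding computation for the explicit piece $u_0(x,t):=f\rmv(t-x)+\int_x^t w_0(x,s)f\rmv(t-s)\,ds$, because $w_{0,x}$ already contains $q$ evaluated at a point, so a naive second differentiation would call for $q'$. I would avoid this by inserting $w_0(x,s)=-\tfrac12\int_{(s-x)/2}^{(s+x)/2}q(\tau)\,d\tau$ and, after one $x$-differentiation, changing the integration variable so that the $x$-dependence is carried out of the argument of $q$ and transferred to the integration limits and to $f\rmv$. A further differentiation then produces only boundary contributions at the limits --- which vanish at the upper endpoints precisely because $f\rmv(0)=0$ --- and integrals of $q$ against $f\rmv'$, all summable in $x$ by Lemma \ref{lemma convolution}. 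Consequently $u^{f\rmv}_x(\cdot,t)$ is absolutely continuous, so $u^{f\rmv}(\cdot,t)\in\mathcal W^2_1([0,t];\mathbb C^n)$, and one obtains a formula for $u^{f\rmv}_{xx}$ holding almost everywhere.

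It remains to assemble $u^{f\rmv}_{tt}-u^{f\rmv}_{xx}+q\,u^{f\rmv}$. I would integrate by parts in $s$ to transfer two derivatives from $f\rmv(t-s)$ onto $w(x,s)$ --- rigorously, performing this on $w_0$ and on $\wt$ separately --- so that the d'Alembertian of $f\rmv(t-x)$ disappears and the terms proportional to $f\rmv'(t-x)$ cancel. The surviving coefficient of $f\rmv(t-x)$ collapses to $2\frac{d}{dx}w(x,x)=-q(x)$ by \eqref{w3}, and the bulk integral becomes $\int_x^t(w_{tt}-w_{xx})(x,s)f\rmv(t-s)\,ds=-q(x)\int_x^t w(x,s)f\rmv(t-s)\,ds$ by the Goursat equation \eqref{w1}, which holds almost everywhere. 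Adding the two contributions gives exactly $-q(x)\,u^{f\rmv}(x,t)$, which is \eqref{u1}; and the continuity of $-u^{f\rmv}_{xx}+q\,u^{f\rmv}=-u^{f\rmv}_{tt}$ follows from the $t$-regularity found at the outset.
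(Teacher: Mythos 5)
Your proposal is correct and follows essentially the same route as the paper's proof: the same decomposition $w=w_0+\wt$, the same integration by parts transferring derivatives of $f\rmv$ onto $\wt$ (with the diagonal terms controlled by $\wt(x,x)=0$), the same change-of-variables device to differentiate $\int_x^t w_0(x,s)f\rmv(t-s)\,ds$ twice in $x$ without ever differentiating $q$, and the same identification $-u^{f\rmv}_{xx}+qu^{f\rmv}=-u^{f\rmv}_{tt}$ for the continuity claim. The only cosmetic difference is that you phrase the surviving diagonal coefficient as $2\tfrac{d}{dx}w(x,x)=-q(x)$, whereas the paper obtains $-q(x)f\rmv(t-x)$ entirely from the explicit $w_0$ computation while the $\wt$ boundary terms cancel; these are the same calculation.
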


\begin{proof}
Firstly, one can differentiate twice the equality \eqref{formula for u-f-hat} in $t$ and get
\begin{equation*}
u^{f\rmv}_{tt}(x,t)=f\rmv''(t-x)+\int_x^tw(x,s)f\rmv''(t-s)ds.
\end{equation*}
Note that from this $u^{f\rmv}_{tt}(\cdot,t)\in \mathcal C([0,t];\mathbb C^n)$ for every $t\geqslant0$. As one can see, for every $x$ the functions $\wt(x,\cdot)$ and $\wt_t(x,\cdot)$ are absolutely continuous. Thus one can integrate by parts:
\begin{multline*}
u^{f\rmv}_{tt}(x,t)=f\rmv''(t-x)+\int_x^tw_0(x,s)f\rmv''(t-s)ds+\int_x^t\wt(x,s)f\rmv''(t-s)ds
\\
=f\rmv''(t-x)+\int_x^tw_0(x,s)f\rmv''(t-s)ds+\wt_{t}(x,x)f\rmv(t-x)+\int_x^t\wt_{tt}(x,s)f\rmv(t-s)ds,
\end{multline*}
since $\wt(x,x)=0$. Differentiate now \eqref{formula for u-f-hat} in $x$,
\begin{multline*}
u^{f\rmv}_{xx}(x,t)=f\rmv''(t-x)+\left(\int_x^tw_0(x,s)f\rmv(t-s)ds\right)_{xx}
\\
+\left(\int_x^t\wt(x,s)f\rmv(t-s)ds\right)_{xx}
=f\rmv''(t-x)+\left(\int_x^tw_0(x,s)f\rmv(t-s)ds\right)_{xx}
\\-\wt_x(x,x)f\rmv(t-x)+\int_x^t\wt_{xx}(x,s)f\rmv(t-s)ds,
\end{multline*}
again using $\wt(x,x)=0$. Then for every $t\geqslant0$ and a.\,e. $x\in(0,t)$
\begin{multline*}
u^{f\rmv}_{tt}(x,t)-u^{f\rmv}_{xx}(x,t)=\int_x^tw_0(x,s)f\rmv''(t-s)ds-\left(\int_x^tw_0(x,s)f\rmv(t-s)ds\right)_{xx}
\\
+(\wt_t(x,x)+\wt_x(x,x))f\rmv(t-x)+\int_x^t(\wt_{tt}(x,s)-\wt_{xx}(x,s))f\rmv(x,s)ds
\\
=\int_x^tw_0(x,s)f\rmv''(t-s)ds-\left(\int_x^tw_0(x,s)f\rmv(t-s)ds\right)_{xx}
\\
-q(x)\int_x^t\wt(x,s)f\rmv(x,s)ds,
\end{multline*}
where we used the equality $\wt_t(x,x)+\wt_x(x,x)=0$. Consider the first and the second terms separately:
\begin{multline*}
\int_x^tw_0(x,s)f\rmv''(t-s)ds
=w_0(x,x)f\rmv'(t-x)+\int_x^tw_{0_t}(x,s)f\rmv'(t-s)ds
\\
=-\frac{f\rmv'(t-x)}2\int_0^xq(\tau)d\tau-\frac14\int_x^t\left(q\left(\frac{s+x}2\right)-q\left(\frac{s-x}2\right)\right)f\rmv'(t-s)ds
\end{multline*}
and
\begin{multline*}
\left(\int_x^tw_0(x,s)f\rmv(t-s)ds\right)_{xx}
=\left(-\frac12\int_x^tds\int_{\frac{s-x}2}^{\frac{s+x}2}d\tau q(\tau)f\rmv(t-s)\right)_{xx}
\\
=\frac12\left(\left(\int_0^xq(\tau)d\tau\right)f\rmv(t-x)-\int_x^t\left(q\left(\frac{s+x}2\right)+q\left(\frac{s-x}2\right)\right)f\rmv(t-s)ds\right)_x
\\
=\frac{q(x)f\rmv(t-x)}2-\frac{f\rmv'(t-x)}2\int_0^xq(\tau)d\tau
\\
-\frac12\left(\int_x^{\frac{t+x}2}q(\sigma)f\rmv(t+x-2\sigma)d\sigma+\int_0^{\frac{t-x}2}q(\sigma)f\rmv(t-x-2\sigma)d\sigma\right)_x
\\
=q(x)f\rmv(t-x)-\frac{f\rmv'(t-x)}2\int_0^xq(\tau)d\tau
\\
-\frac12\int_x^{\frac{t+x}2}q(\sigma)f'(t+x-2\sigma)d\sigma+\frac12\int_0^{\frac{t-x}2}q(\sigma)f'(t-x-2\sigma)d\sigma
\\
=q(x)f\rmv(t-x)-\frac{f\rmv'(t-x)}2\int_0^xq(\tau)d\tau
\\-\frac14\int_x^t\left(q\left(\frac{s+x}2\right)-q\left(\frac{s-x}2\right)\right)f\rmv'(t-s)ds,
\end{multline*}
thus one has
\begin{equation*}
\int_x^tw_0(x,s)f\rmv''(t-s)ds-\left(\int_x^tw_0(x,s)f\rmv(t-s)ds\right)_{xx}=-q(x)f\rmv(t-x)
\end{equation*}
and
\begin{multline*}
u^{f\rmv}_{tt}(x,t)-u^{f\rmv}_{xx}(x,t)
\\
=-q(x)\left(f\rmv(t-x)+\int_x^tw(x,s)f\rmv(t-s)ds\right)=-q(x)u^{f\rmv}(x,t).
\end{multline*}
We see that \eqref{u1} holds. For every $t\geqslant0$ one has $u^{f\rmv}_{xx}(\cdot,t)\in \mathcal L_1([0,t];\mathbb C^n)$, which means that $u^{f\rmv}(\cdot,t)\in \mathcal H^2_1([0,t];\mathbb C^n)$ and that $-u^{f\rmv}_{xx}(x,t)+q(x)u^{f\rmv}(x,t)=-u^{f\rmv}(x,t)$ is a continuous function of $x$. Condition \eqref{u2} is obviously satisfied and \eqref{u3} holds, because $w(0,t)\equiv0$.
\end{proof}

\section{Smooth waves}\label{section 4}

In this section we show that the solution $u^{f\rmv}_{\beta}(x,t)$ to the system $\beta^T\rmv$ is at the same time a solution of the system $\alpha^T\rmv$. Since the latter is known to be unique, see \eqref{abstract representation for solution}, this means that $u^{f\rmv}_{\alpha}=u^{f\rmv}_{\beta}$.

\begin{Theorem}\label{theorem 2}
Let a locally summable Hermitian $\matr$-valued potential $q$ be such that the operator $L_0$ is positive definite. Then $u^{f\rmv}_{\beta}(x,t)$ given by \eqref{u hat f} is the solution of the system $\alpha^T\rmv$, \eqref{alpha_v1}--\eqref{alpha_v4}.
\end{Theorem}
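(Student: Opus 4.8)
The plan is to verify the four defining relations \eqref{alpha_v1}--\eqref{alpha_v4} of the system $\alpha^T\rmv$ for the function $u^{f\rmv}_\beta$, reading off almost everything from Theorem \ref{theorem solution} (whose function $u^{f\rmv}$ of \eqref{formula for u-f-hat} is exactly $u^{f\rmv}_\beta$ of \eqref{u hat f}). The only genuine issue is that in $\alpha^T\rmv$ the symbol $\ddot u$ denotes the second derivative of the $\mH$-valued map $t\mapsto u^{f\rmv}_\beta(\cdot,t)$, whereas Theorem \ref{theorem solution} supplies only the pointwise partial derivative $\partial_t^2 u^{f\rmv}_\beta$. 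The heart of the proof is therefore to show that these two notions coincide, i.e. that $t\mapsto u^{f\rmv}_\beta(\cdot,t)$ is twice continuously differentiable with values in $\mH=\mathcal L_2([0,\infty);\bbc)$ and that its abstract derivatives are represented by the pointwise ones.

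For the domain relation \eqref{alpha_v1} and the equation \eqref{alpha_v2} I would argue as follows. By Theorem \ref{theorem solution} we have $u^{f\rmv}_\beta(\cdot,t)\in\mathcal W^2_1([0,t];\bbc)$, and by \eqref{u2} it vanishes for $x\gs t$; being compactly supported and $\mathcal W^2_1$ it lies in $\mathcal L_2([0,\infty);\bbc)\cap\mathcal W^2_{1,\rm loc}([0,\infty);\bbc)$. From \eqref{u1} one has $-\partial_x^2u^{f\rmv}_\beta(\cdot,t)+q(\cdot)u^{f\rmv}_\beta(\cdot,t)=-\partial_t^2u^{f\rmv}_\beta(\cdot,t)$, whose right-hand side is continuous in $x$ and hence in $\mathcal L_2$. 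Thus $u^{f\rmv}_\beta(\cdot,t)\in\Dom L_{\rm max}=\Dom L_0^*$ with $L_0^*u^{f\rmv}_\beta(\cdot,t)=-\partial_t^2u^{f\rmv}_\beta(\cdot,t)$, which is \eqref{alpha_v1}; once the identification of time derivatives below is in place, this rewrites as $\ddot u(t)+L_0^*u(t)=0$, i.e. \eqref{alpha_v2}.

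The key step is differentiability in $\mH$. Differentiating \eqref{formula for u-f-hat} in $t$ and using $\supp f\rmv\subset(0,\infty)$, so that $f\rmv$ and all its derivatives vanish at the origin and the boundary terms disappear, I obtain for every $k$ the representation
\[
\partial_t^k u^{f\rmv}_\beta(x,t)=f\rmv^{(k)}(t-x)+\int_x^t w(x,s)f\rmv^{(k)}(t-s)\,ds,
\]
again of the form \eqref{u hat f} with $f\rmv$ replaced by $f\rmv^{(k)}$. Since $w(x,s)$ is continuous for $0\ls x\ls s$ and $f\rmv^{(k)}$ is continuous, each $\partial_t^k u^{f\rmv}_\beta$ is jointly continuous in $(x,t)$, vanishes for $x\gs t$, and, for $t$ in a bounded interval, is supported in a fixed compact set; hence $t\mapsto \partial_t^k u^{f\rmv}_\beta(\cdot,t)$ is continuous in the sup-norm and a fortiori in $\mathcal L_2$. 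Writing the difference quotient through the fundamental theorem of calculus pointwise in $x$,
\[
\frac{u^{f\rmv}_\beta(x,t+h)-u^{f\rmv}_\beta(x,t)}{h}-\partial_t u^{f\rmv}_\beta(x,t)=\frac1h\int_t^{t+h}\bigl(\partial_\tau u^{f\rmv}_\beta(x,\tau)-\partial_t u^{f\rmv}_\beta(x,t)\bigr)\,d\tau,
\]
and bounding its $\mathcal L_2$-norm by $\sup_{\tau\in[t,t+h]}\|\partial_\tau u^{f\rmv}_\beta(\cdot,\tau)-\partial_t u^{f\rmv}_\beta(\cdot,t)\|_{\mathcal L_2}$, the continuity just noted forces the quotient to converge in $\mathcal L_2$. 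Hence the abstract derivative $\dot u(t)$ exists and equals $\partial_t u^{f\rmv}_\beta(\cdot,t)$; the same argument applied to $\partial_t u^{f\rmv}_\beta$ gives $\ddot u(t)=\partial_t^2 u^{f\rmv}_\beta(\cdot,t)$.

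Finally, since both $u^{f\rmv}_\beta(\cdot,t)$ and $\partial_t u^{f\rmv}_\beta(\cdot,t)$ vanish for $x\gs t$ (the former by \eqref{u2}, the latter by its analogous representation), at $t=0$ they vanish identically, giving $u^{f\rmv}_\beta(\cdot,0)=\dot u(0)=0$ in $\mH$, which is \eqref{alpha_v3}, while \eqref{u3} is precisely \eqref{alpha_v4}; together with the preceding paragraphs this shows that $u^{f\rmv}_\beta$ solves $\alpha^T\rmv$. I expect the principal obstacle to be exactly this passage from pointwise to $\mH$-valued differentiation: one must guarantee convergence of the difference quotients in the norm of $\mathcal L_2$, for which the joint continuity of the $\partial_t^k u^{f\rmv}_\beta$ and their uniform compact support, both inherited from the explicit kernel representation, are essential, while the vanishing of $f\rmv$ and all its derivatives at $0$ is what keeps the time-differentiated formulas in the clean convolution form and eliminates every boundary contribution.
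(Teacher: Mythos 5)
Your proposal is correct and follows essentially the same route as the paper: conditions \eqref{alpha_v1}, \eqref{alpha_v3}, \eqref{alpha_v4} are read off from Theorem \ref{theorem solution}, and the substantive point --- that the $\mH$-valued derivative $\ddot u$ exists and coincides with the pointwise $\partial_t^2 u^{f\rmv}_\beta$ --- is handled via the observation $\partial_t u^{f\rmv}_\beta=u^{f\rmv{}'}_\beta$ and an elementary $\mathcal L_2$ bound on the difference quotient over the compact support. The only (immaterial) difference is that the paper estimates the quotient by Taylor's formula with the second derivative, while you use the fundamental theorem of calculus together with $\mathcal L_2$-continuity of $t\mapsto\partial_t u^{f\rmv}_\beta(\cdot,t)$.
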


\begin{proof}
By Theorem \ref{theorem solution} the solution $u^{f\rmv}_{\beta}$ satisfies \eqref{alpha_v1}, clearly \eqref{alpha_v3} and \eqref{alpha_v4} are satisfied as well. It remains to see that \eqref{alpha_v2} holds. This means we need to show that the derivative $\ddot u^{f\rmv}_{\beta}$ in $\mH$ exists and coincides with the partial derivative $(u^{f\rmv}_{\beta})_{tt}$. This follows from the next lemma (applied twice).
\end{proof}

\begin{Lemma}
If $f\rmv\in \mathcal C^{\infty}([0,\infty);\mathbb C^n)$ and $\supp f\rmv\subset(0,\infty)$, then $\dot u^{f\rmv}_{\beta}=(u^{f\rmv}_{\beta})_t$.
\end{Lemma}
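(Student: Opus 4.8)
The plan is to show that the difference quotient of $u^{f\rmv}_{\beta}(\cdot,t)$ in $\mH$ converges in norm to the pointwise partial derivative $(u^{f\rmv}_{\beta})_t(\cdot,t)$, using the fundamental theorem of calculus together with an $\mH$-continuity statement for that partial derivative. First I would compute the partial derivative by differentiating \eqref{u hat f} in $t$: since $f\rmv(0)=0$ (as $\supp f\rmv\subset(0,\infty)$), the boundary term produced by the upper limit of the integral drops out, and one obtains
\begin{equation*}
	(u^{f\rmv}_{\beta})_t(x,t)=f\rmv'(t-x)+\int_x^tw(x,s)f\rmv'(t-s)ds,
\end{equation*}
that is, $(u^{f\rmv}_{\beta})_t=u^{(f\rmv)'}_{\beta}$, the solution associated with the control $f\rmv'$. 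This self-similar structure is exactly what makes the ``applied twice'' remark in the proof of Theorem \ref{theorem 2} legitimate: the second strong derivative is then obtained by reapplying the lemma with $f\rmv'$ in place of $f\rmv$.

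Next I would record that $(u^{f\rmv}_{\beta})_t(x,\tau)$ is jointly continuous in $(x,\tau)$ on the triangle $\{0\ls x\ls\tau\}$, because $w$ is continuous there (Section \ref{section 3}) and $f\rmv,f\rmv'$ are smooth, and that it is supported in $x\in[0,\tau]$ by \eqref{u2}. Restricting $\tau$ to a compact interval $[0,T_0]$, the $x$-supports stay inside $[0,T_0]$ and the joint continuity is uniform, so the map $\tau\mapsto(u^{f\rmv}_{\beta})_t(\cdot,\tau)$ is continuous from $[0,T_0]$ into $\mathcal L_2([0,T_0];\bbc)$, hence into $\mH$; in particular it is Bochner integrable on every finite interval.

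Then I would lift the scalar fundamental theorem of calculus to $\mH$. For each fixed $x$ the function $t\mapsto u^{f\rmv}_{\beta}(x,t)$ is $\mathcal C^1$ with derivative $(u^{f\rmv}_{\beta})_t(x,t)$, so
\begin{equation*}
	u^{f\rmv}_{\beta}(x,t+h)-u^{f\rmv}_{\beta}(x,t)=\int_t^{t+h}(u^{f\rmv}_{\beta})_t(x,\tau)d\tau.
\end{equation*}
Pairing the $\mH$-valued Bochner integral $\int_t^{t+h}(u^{f\rmv}_{\beta})_t(\cdot,\tau)d\tau$ against an arbitrary $\psi\in\mH$ and applying Fubini's theorem (justified by the integrability from the previous step) identifies this Bochner integral with $u^{f\rmv}_{\beta}(\cdot,t+h)-u^{f\rmv}_{\beta}(\cdot,t)$ as an element of $\mH$. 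Dividing by $h$ and subtracting $(u^{f\rmv}_{\beta})_t(\cdot,t)$ gives
\begin{equation*}
	\left\|\frac{u^{f\rmv}_{\beta}(\cdot,t+h)-u^{f\rmv}_{\beta}(\cdot,t)}h-(u^{f\rmv}_{\beta})_t(\cdot,t)\right\|_{\mH}\ls\sup_{\tau}\left\|(u^{f\rmv}_{\beta})_t(\cdot,\tau)-(u^{f\rmv}_{\beta})_t(\cdot,t)\right\|_{\mH},
\end{equation*}
the supremum over $\tau$ between $t$ and $t+h$, which tends to $0$ as $h\to0$ by the $\mH$-continuity established above, yielding $\dot u^{f\rmv}_{\beta}=(u^{f\rmv}_{\beta})_t$.

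The step I expect to be the main obstacle is the second one: verifying that $\tau\mapsto(u^{f\rmv}_{\beta})_t(\cdot,\tau)$ is genuinely continuous in the $\mH$-norm, which requires controlling the growth of the $x$-support together with the joint continuity of $w$ up to the diagonal $x=\tau$, so that the Bochner integral is well defined and Fubini applies. Everything else is routine once this regularity is in hand.
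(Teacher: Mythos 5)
Your proof is correct, and it shares with the paper the essential observation that differentiating \eqref{u hat f} in $t$ reproduces the same formula with $f\rmv'$ in place of $f\rmv$ (the boundary term $w(x,t)f\rmv(0)$ vanishing because $\supp f\rmv\subset(0,\infty)$), so that $(u^{f\rmv}_{\beta})_t=u^{(f\rmv)'}_{\beta}$ and the lemma can be iterated. Where you diverge is in the convergence mechanism for the difference quotient. The paper stays entirely pointwise in $x$: it applies the second-order Taylor formula with integral remainder to $t\mapsto u^{f\rmv}(x,t)$, obtaining $\bigl\|\tfrac{u(x,t+h)-u(x,t)}{h}-u_t(x,t)\bigr\|\ls\tfrac{|h|}{2}\max_{t_1\in[t,t+h]}\|u_{tt}(x,t_1)\|$, and then integrates the square of this over the compact $x$-support, using the continuity of $w$ to bound $u_{tt}$; this yields an explicit quantitative rate $O(h^2)$ for the squared $\mH$-error at the cost of invoking the second $t$-derivative. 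You instead use the fundamental theorem of calculus to write the difference quotient as the Bochner average $\tfrac1h\int_t^{t+h}u_t(\cdot,\tau)\,d\tau$ and conclude from the norm continuity of $\tau\mapsto u_t(\cdot,\tau)$ in $\mH$, which requires only the first derivative but obliges you to set up the Bochner integral and the Fubini identification, and gives convergence without a rate. Both routes rest on the same two facts --- continuity of $w$ up to the diagonal and the containment of the $x$-support in $[0,t+1]$ --- and your handling of these (joint continuity across $x=\tau$, where both the free term and the integral vanish by $f\rmv'(0)=0$) is sound. The step you flag as the main obstacle is indeed the only place requiring care, and your treatment of it is adequate.
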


\begin{proof}
The assertion follows from a direct estimate. Let $h\in(-1,1)\backslash\{0\}$. Since
\begin{equation*}
(u^{f\rmv}_{\beta})_t(x,t)=f\rmv'(t-x)+\int_x^tw(x,s)f\rmv'(t-s)ds=u^{f'\rmv}_{\beta}(x,t)
\end{equation*}
and $f\rmv'$ is again from $\mathcal C^{\infty}([0,\infty);\mathbb C^n)$ with $\supp f'\rmv\subset(0,\infty)$, one can differentiate any number of times, which means that $u^{f\rmv}(x,\cdot)$ is a smooth function. Using the Taylor formula 
\begin{equation*}
g(x+h)=g(x)+g'(x)h+\frac12\int_x^{x+h}(x+h-t)g''(t)dt
\end{equation*}
for a smoooth function $g$, which gives the estimate
\begin{equation*}
\left\|\frac{g(x+h)-g(x)}h-g'(x)\right\|_{\bbc}\leqslant\frac{|h|}2\max_{t\in[x,x+h]}\|g''(t)\|_{\bbc},
\end{equation*}
we obtain for $u^{f\rmv}(x,\cdot)$ the following estimates, assuming $|h|<1$:
\begin{multline*}
\int_0^{\infty}\left\|
\frac{u^{f\rmv}(x,t+h)-u^{f\rmv}(x,t)}h-u^{f\rmv}_t(x,t)
\right\|^2_{\mathbb C^n}dx
\\
\leqslant\frac{h^2}2
\int_0^{\infty}\max_{t_1\in[t,t+h]}\left\|
u^{f\rmv}_{tt}(x,t_1)
\right\|_{\mathbb C^n}^2dx
\\
=\frac{h^2}2\int_0^{t+h}\max_{t_1\in[t,t+h]}\left\|
f\rmv''(t_1-x)+\int_x^{t_1}w(x,s)f\rmv''(t_1-s)ds
\right\|_{\mathbb C^n}^2dx
\\
\leqslant\frac{h^2}2
\max_{t_1\in[0,t+1]}\|f\rmv''(t_1)\|_{\mathbb C^n}^2
(t+1)\Big[1+\max_{t_1\in[0,t+1],x_1\in[0,t_1]}\|w(x_1,t_1)\|_{\matr}^2(t+1)\Big]^2.
\end{multline*}
This means that $\frac{u^{f\rmv}(\cdot,t+h)-u^{f\rmv}(\cdot,t)}h\to u^{f\rmv}_t(\cdot,t)$ as $h\to0$ in the space $\mH=\mathcal L_2([0,\infty);\mathbb C^n)$.
\end{proof}

The next corollary follows immediately.

\begin{Corollary}
The function $u^f(x,t)=u^{f\rmv}_{\beta}(x,t)$, where $f=\Lambda^T f\rmv$, is the solution of the system $\alpha^T$, \eqref{alpha1}--\eqref{alpha4}. The closure of the operator $W^T$ is an isomorphism from $\mF^T$ to $\mU^T$ and $\mU^T=\mathcal L_2(\otbbc)$.
\end{Corollary}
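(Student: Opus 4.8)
The plan is to obtain both assertions from the results already proved for the telegraph problem $\beta^T\rmv$ and the explicit representation \eqref{u hat f}, transporting them to $\alpha^T$ through the isomorphism $\Lambda^T$ of Section \ref{section 2}. For the first assertion I would just combine Theorem \ref{theorem 2} with the equivalence of $\alpha^T$ and $\alpha^T\rmv$: Theorem \ref{theorem 2} gives that $u^{f\rmv}_{\beta}$ solves $\alpha^T\rmv$, and the parametrization \eqref{Lambda}, $f=\Lambda^Tf\rmv$, was precisely designed so that a solution of $\alpha^T\rmv$ becomes a solution of $\alpha^T$; hence $u^f:=u^{f\rmv}_{\beta}$ solves \eqref{alpha1}--\eqref{alpha4}.

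The substance of the corollary is the isomorphism property, and the decisive step is to read off from \eqref{u hat f} at $t=T$ the triangular (Volterra) structure of $W^T\rmv$. The image vanishes for $x\geqslant T$ by \eqref{u2}, so $W^T\rmv$ maps into $\mathcal L_2(\otbbc)\subset\mH$. Introducing the unitary flip $(Jf\rmv)(x):=f\rmv(T-x)$ on $\mathcal L_2(\otbbc)$ and writing $g\rmv:=Jf\rmv$, the identity $f\rmv(T-s)=g\rmv(s)$ turns \eqref{u hat f} into
\begin{equation*}
(W^T\rmv f\rmv)(x)=u^{f\rmv}_{\beta}(x,T)=g\rmv(x)+\int_x^Tw(x,s)g\rmv(s)\,ds=\big((I+\mathcal K)Jf\rmv\big)(x),
\end{equation*}
where $\mathcal K$ is the integral operator with kernel $w(x,s)$ on $\{0\leqslant x\leqslant s\leqslant T\}$. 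Since $w$ is continuous there (Section \ref{section 3}), $\mathcal K$ is a bounded Volterra operator on $\mathcal L_2(\otbbc)$, the standard factorial estimate for iterated kernels makes it quasinilpotent, and therefore $I+\mathcal K$ is boundedly invertible. As $J$ is unitary, $W^T\rmv=(I+\mathcal K)J$ is an isomorphism in $\mathcal L_2(\otbbc)$, and in particular it is surjective onto $\mathcal L_2(\otbbc)$.

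It then remains to transfer this through \eqref{relation btw W}. The map $\Lambda^T:\mathcal L_2(\otbbc)\to\mathcal L_2(\otk)$ is an isomorphism which restricts to a bijection of the smooth-control classes $\dot\mF^T\rmv\to\dot\mF^T$ (applying the fixed isomorphism $\lambda$ pointwise in $t$ preserves $\mathcal C^{\infty}$-smoothness and the support condition). Hence the ranges of $W^T$ on $\dot\mF^T$ and of $W^T\rmv$ on $\dot\mF^T\rmv$ coincide, and since $\overline{W^T\rmv}$ is a homeomorphism it carries the dense domain to a dense set, giving $\mU^T=\overline{\operatorname{Ran}W^T}=\mathcal L_2(\otbbc)$. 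Finally $W^T\rmv=W^T\Lambda^T$ yields $\overline{W^T}=\overline{W^T\rmv}\,(\Lambda^T)^{-1}$, a composition of the isomorphism $(\Lambda^T)^{-1}:\mF^T\to\mathcal L_2(\otbbc)$ with the isomorphism $\overline{W^T\rmv}$, hence itself an isomorphism from $\mF^T$ onto $\mU^T=\mathcal L_2(\otbbc)$.

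I do not anticipate a real difficulty, as all the analysis resides in the previous sections; the only point deserving care is the bookkeeping of domains and closures when composing the densely defined operators $W^T$, $W^T\rmv$ and $\Lambda^T$, together with the verification that $\Lambda^T$ indeed preserves the defining conditions of $\dot\mF^T$.
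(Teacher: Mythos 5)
Your proposal is correct and follows essentially the same route as the paper, which states the corollary as an immediate consequence of Theorem \ref{theorem 2}: the intended justification is precisely the factorization of $W^T\rmv$ read off from \eqref{u hat f} as the identity plus a Volterra operator with continuous kernel $w$ composed with the flip, followed by the transfer to $W^T$ through the isomorphism $\Lambda^T$ of \eqref{relation btw W}. You merely write out the details (quasinilpotence of $\mathcal K$, density of the smooth controls, bookkeeping of closures) that the paper leaves implicit.
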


\section{The control operator and the Sobolev norm}\label{section 5}

The operator $W_{\rm v}^T:f\rmv\mapsto u^{f\rmv}_{\beta}(\cdot,t)$ defined by \eqref{u hat f} in $\mathcal L_2([0,T];\mathbb C^n)$ can be at the same time considered as an operator acting in the Sobolev space $\mathcal H^2([0,T];\mathbb C^n)$. One can see that it is a bounded operator in the norm of this space. We show that it is also boundedly invertible in this norm.

\begin{Theorem}\label{theorem isomorphism}
If $q\in \mathcal L_2([0,T];\matr)$, then the operator $W_{\rm v}^T$ given by \eqref{relation btw W} and restricted to the space $\mathcal H^2([0,T];\mathbb C^n)$ is an isomorphism in this space.
\end{Theorem}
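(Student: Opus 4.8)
My plan is to base the whole argument on an intertwining identity that trades the second $x$-derivative of $W^T\rmv f\rmv$ for $W^T\rmv$ applied to $f\rmv''$. First I would record the regularity upgrade that $q\in\mathcal L_2([0,T];\matr)$ buys: every term on the right-hand side of \eqref{w-tt} then lies in $\mathcal L_2$ over the triangle $\{0\ls x\ls t\ls T\}$, since the terms of the form $q(\cdot)\times(\text{primitive of }q)$ are products of an $\mathcal L_2$ factor and a bounded continuous one, while the convolution terms are continuous and bounded because $\mathcal L_2\ast\mathcal L_2\subset\mathcal C$. Hence $\wt_{tt}$, and with it $\wt_{xx}=\wt_{tt}+q w$, are square summable, and similarly $w(\cdot,T),w_t(\cdot,T)\in\mathcal L_2([0,T];\matr)$. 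Differentiating \eqref{u hat f} twice in $x$, evaluating at $t=T$, and using Theorem \ref{theorem solution} in the form \eqref{u1} (so that $u_{xx}=u_{tt}+qu$) together with integration by parts in the time variable inside $\int_x^Tw\,f\rmv''$, I expect the identity (for $\psi:=W^T\rmv f\rmv$)
\begin{equation*}
\psi''=W^T\rmv(f\rmv'')+q\,\psi+f\rmv(0)\,w_t(\cdot,T)+f\rmv'(0)\,w(\cdot,T).
\end{equation*}
The delicate point is that the individual diagonal traces $w_x(x,x)$ and $w_t(x,x)$ are not well defined for an $\mathcal L_2$ potential, but the boundary contributions in which they occur recombine into the total derivative $\frac{d}{dx}w(x,x)=-\frac12q(x)$, so that the coefficient of $f\rmv(T-x)$ cancels identically; for $f\rmv$ vanishing near the origin this is precisely Theorem \ref{theorem solution}, and the two extra surface terms appear only because a general element of $\mathcal H^2$ need not vanish at $0$.

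Boundedness in $\mathcal H^2$ is then immediate. Since $W^T\rmv$ is bounded in $\mathcal L_2$ by the isomorphism established in Section \ref{section 4}, the norm $\|\psi''\|_{\mathcal L_2}$ is controlled by $\|f\rmv''\|_{\mathcal L_2}$, by $\|q\|_{\mathcal L_2}\|\psi\|_{\infty}\ls C\|q\|_{\mathcal L_2}\|f\rmv\|_{\infty}$, and by the traces $|f\rmv(0)|,|f\rmv'(0)|\ls C\|f\rmv\|_{\mathcal H^2}$ multiplied by the fixed $\mathcal L_2$-functions $w_t(\cdot,T),w(\cdot,T)$. On the bounded interval the $\mathcal H^2$-norm is equivalent to $\|f\rmv\|_{\mathcal L_2}+\|f\rmv''\|_{\mathcal L_2}$, so this yields $\|\psi\|_{\mathcal H^2}\ls C\|f\rmv\|_{\mathcal H^2}$, using only the embedding $\mathcal H^2([0,T])\hookrightarrow\mathcal C([0,T])$ to bound $\|\psi\|_\infty$ and $\|f\rmv\|_\infty$.

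For invertibility I would first prove the a priori estimate $\|f\rmv\|_{\mathcal H^2}\ls C\|W^T\rmv f\rmv\|_{\mathcal H^2}$. Solving the identity for $W^T\rmv(f\rmv'')$ and applying the bounded $\mathcal L_2$-inverse of $W^T\rmv$ expresses $f\rmv''$ through $\psi''$, $q\psi$ and the two surface terms; the surface terms carry the factors $|f\rmv(0)|,|f\rmv'(0)|$, which by an interpolation inequality on the interval are bounded by $\delta\|f\rmv''\|_{\mathcal L_2}+C_\delta\|f\rmv\|_{\mathcal L_2}$ for arbitrarily small $\delta$. Choosing $\delta$ small absorbs the $\|f\rmv''\|_{\mathcal L_2}$ term into the left-hand side, and $\|f\rmv\|_{\mathcal L_2}\ls C\|\psi\|_{\mathcal L_2}$ from the $\mathcal L_2$-isomorphism closes the estimate. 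This shows $W^T\rmv$ is injective with closed range in $\mathcal H^2$ and has a bounded inverse where defined. Surjectivity remains: given $\psi\in\mathcal H^2$, its unique $\mathcal L_2$-preimage $f\rmv=(W^T\rmv)^{-1}\psi$ must be shown to lie in $\mathcal H^2$. Approximating $\psi$ by smooth $\psi_n$ (dense in $\mathcal H^2$), I would prove $(W^T\rmv)^{-1}\psi_n\in\mathcal H^2$ by bootstrapping the reflected Volterra equation $(I+V)g=R\psi_n$, where $R$ is the isometric reflection $x\mapsto T-x$ and $V$ the Volterra operator with kernel $w(T-x,T-\sigma)$: differentiating twice and using that the kernel's pure second $x$-derivative is Hilbert--Schmidt (again by the $\mathcal L_2$ upgrade) and that its diagonal traces recombine into total derivatives involving only $q$. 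The a priori estimate then promotes this to all $\psi\in\mathcal H^2$ by completeness, giving the bounded inverse.

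The main obstacle will be the rigorous derivation of the intertwining identity for controls not vanishing at the origin, in particular the cancellation of the ill-defined diagonal traces into the total derivative of $w(x,x)$, and the parallel regularity statement for the preimage of smooth data, namely that the Volterra resolvent preserves the regularity class of the kernel $w$. Everything else reduces to the $\mathcal L_2$-theory of Section \ref{section 4}, the summability improvement of \eqref{w-tt} furnished by $q\in\mathcal L_2$, and the one-dimensional interpolation and embedding inequalities.
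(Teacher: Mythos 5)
Your architecture is genuinely different from the paper's: you build everything on the intertwining identity $\psi''=W^T\rmv(f\rmv'')+q\,\psi+f\rmv(0)\,w_t(\cdot,T)+f\rmv'(0)\,w(\cdot,T)$, get boundedness from it, prove a lower bound via the $\mathcal L_2$-inverse plus interpolation to absorb the trace terms, and then fight for surjectivity by density and a Volterra bootstrap. The paper instead passes to $\tilde W^T=W_{\rm v}^TY^T=I+A^T$ and obtains bounded invertibility in $\mathcal H^2$ in one stroke from the truncated Neumann identity $(I+A^T_{\mathcal L})^{-1}=I-A^T_{\mathcal L}+(A^T_{\mathcal L})^2-(A^T_{\mathcal L})^3(I+A^T_{\mathcal L})^{-1}$ combined with the smoothing chain $\mathcal A:\mathcal L_2\to\mathcal C\to\mathcal C^1\to\mathcal H^2$, so that $(A^T_{\mathcal L})^3$ maps $\mathcal L_2$ boundedly into $\mathcal H^2$; no a priori estimate, no interpolation, no density argument. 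Your identity itself is correct: the ill-defined diagonal traces do recombine, since $w_x(x,x)+w_t(x,x)=\frac{d}{dx}w(x,x)=-\frac12q(x)$ and $w_{xx}=w_{tt}+qw$, and the boundedness and a priori estimate parts go through.

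The gap is in the surjectivity bootstrap. You propose to differentiate the reflected Volterra equation twice ``using that the kernel's pure second $x$-derivative is Hilbert--Schmidt.'' It is not: $w=w_0+\wt$ with $w_0(x,t)=-\frac12\int_{(t-x)/2}^{(t+x)/2}q(s)\,ds$, so $w_{0_{xx}}$ involves $q'$ and does not exist as a function for $q\in\mathcal L_2$; only $\wt_{xx}=\wt_{tt}+qw$ enjoys the $\mathcal L_2$ upgrade of your first paragraph. Your remark about diagonal traces recombining disposes of the boundary terms of the form $K(y,y)f\rmv(y)$, but not of the interior term $\int_0^y\partial_y^2\bigl[w_0(T-y,T-\sigma)\bigr]f\rmv(\sigma)\,d\sigma$, which is exactly where the nonexistent $q'$ would appear. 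The missing idea is structural and is what the paper exploits when computing $(A^Tf\rmv)''$: since $w_0$ depends on $x$ and $s$ only through primitives of $q$ evaluated at $(s\pm x)/2$, one $x$-derivative can be traded for an $s$-derivative and integrated by parts onto $f\rmv$ -- legitimate at the second bootstrap step because $f\rmv$ is already known to lie in $\mathcal H^1$ by then. With that repair your bootstrap closes; note, however, that it then applies to every $\psi\in\mathcal H^2$ directly, which renders the smooth approximation and the a priori estimate redundant and collapses your argument into essentially the paper's.
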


\begin{proof}
Let $Y^T:f\rmv(\cdot)\mapsto f\rmv(T-\cdot)$ be the reflection operator in $\mathcal H^2([0,T];\mathbb C^n)$.
The property of being an isomorphism in $\mathcal H^2([0,T];\mathbb C^n)$ holds simultaneously for $W_{\rm v}^T$ and for the operator $\tilde W^T:=W_{\rm v}^TY^T=I+A^T$ where the operator $A^T$ acts in $\mathcal H^2([0,T];\mathbb C^n)$ by the rule
\begin{equation*}
(A^Tf\rmv)(x)=(\mathcal A f\rmv)(x):
=
\int_x^T
\left(
\wt(x,s)-\int_{\frac{s-x}2}^{\frac{s+x}2}\frac{q(\tau)}2d\tau
\right)
f\rmv(s)ds.
\end{equation*}
One has:
\begin{multline*}
(A^Tf\rmv)'(x)
=
\left(
\int_0^x\frac{q(\tau)}2d\tau 
\right)
f\rmv(x)
\\
+
\int_x^T
\left(
\wt_x(x,s)-\frac{q\left(\frac{s+x}2\right)+q\left(\frac{s-x}2\right)}4
\right)
f\rmv(s)ds,
\end{multline*}
\begin{multline*}
(A^Tf\rmv)''(x)
=
(q(x)-\wt_x(x,x))f\rmv(x)+
\left(
\int_0^x\frac{q(\tau)}2d\tau
\right)
f\rmv'(x)
\\
+
\left(
\frac{q\left(\frac{T-x}2\right)-q\left(\frac{T+x}2\right)}4 
\right)
f\rmv(T)
\\
+
\int_x^T\wt_{xx}(x,s)f\rmv(s)ds
-
\int_x^T\left(\frac{q\left(\frac{s+x}2\right)-q\left(\frac{s-x}2\right)}4\right)f\rmv'(s)ds.
\end{multline*}
From these equalities it is immedately clear that $A^T$ is a bounded operator in $\mathcal H^2([0,T];\mathbb C^n)$.

The expression $\mathcal A$ defines several operators in different spaces. In $\mathcal L_2([0,T];\mathbb C^n)$ it defines the operator $A^T_{\mathcal L}$ which is a Volterra integral operator, thus $(I+A^T_{\mathcal L})^{-1}$ exists, is bounded and the Neumann's series converges in $\mathfrak B(\mathcal L_2([0,T];\mathbb C^n))$:
\begin{equation}\label{Neumann series}
(I+A^T_{\mathcal L})^{-1}=\sum_{n=0}^{\infty}(-1)^n(A^T_{\mathcal L})^n
=
I-A^T_{\mathcal L}+(A^T_{\mathcal L})^2-(A^T_{\mathcal L})^3(I+A^T_{\mathcal L})^{-1}.
\end{equation}
Let us show that the expression $\mathcal A$ defines bounded operators
\begin{enumerate}
\item from $\mathcal L_2([0,T];\mathbb C^n)$ to $\mathcal C([0,T];\mathbb C^n)$,
\item from $\mathcal C([0,T];\mathbb C^n)$ to $\mathcal C^1([0,T];\mathbb C^n)$,
\item form $\mathcal C^1([0,T];\mathbb C^n)$ to $\mathcal H^2([0,T];\mathbb C^n)$.
\end{enumerate}
These operators and $A^T$ can be regarded as restrictions of $A^T_{\mathcal L}$.
Denote 
\begin{align*}
a_1&:=1/2\|q\|_{\mathcal L_1([0,T];\matr)},
\\
a_2&:=\|q\|_{\mathcal L_2([0,T];\matr)},
\\
b_1&:=\|\wt\|_{\mathcal C(\{(x,t)\,|\,t\in[0,T],x\in[0,t]\};\matr)},
\\
b_2&:=\|\wt_x\|_{\mathcal C(\{(x,t)\,|\,t\in[0,T],x\in[0,t]\};\matr)},
\\
b_3&:=\int_0^Tdx\left(\int_x^T\|\wt_{xx}(x,t)\|_{\matr}dt\right)^2.
\end{align*}
We need to show that $b_3$ is finite. Since $\wt_{xx}(x,t)=\wt_{tt}(x,t)+q(x)w(x,t)$, $w$ is continuous and $q\in \mathcal L_2([0,T];\matr)$, we need to prove that 
$$
\int_0^Tdx\left(\int_x^T\|\wt_{tt}(x,t)\|_{\matr}dt\right)^2<\infty.
$$
It is enough to show that $\int_x^T\|\wt_{tt}(x,t)\|_{\matr}dt$ is a bounded function of $x\in[0,T]$. Looking at \eqref{w-tt} and \eqref{v-xi-xi}--\eqref{v-eta-eta} to estimate $\hat w$, with
$$
b_4:=\|w\|_{\mathcal C(\{(x,t)\,|\,t\in[0,T],x\in[0,t]\};\matr)},
$$
for every $x\in[0,T]$ one can write, using the estimates from the proof of Lemma \ref{lemma convolution}:
\begin{equation*}
\int_x^T\|\wt_{tt}(x,t)\|_{\matr}dt\leqslant2a_1b_4+\frac{3a_1^2}4+\frac{4a_1^2+2a_1^2}8+\frac{9a_1^2b_4}4.
\end{equation*}
From this one has $b_3<\infty$. Then under corresponding assumptions about $f\rmv$ the following estimates hold:
\begin{enumerate}[label=\roman*.]
\item $\|A^Tf\rmv\|_{\mathcal C}\leqslant(a_1+b_1)\|f\rmv\|_{\mathcal L_1}\leqslant(a_1+b_1)\sqrt T\|f\rmv\|_{\mathcal L_2}$,
\item
$\|(A^Tf\rmv)'\|_{\mathcal C}\leqslant a_1\|f\rmv\|_{\mathcal C}+b_2 T\|f\rmv\|_{\mathcal C}+2a_1\|f\rmv\|_{\mathcal C}=(3a_1+b_2 T)\|f\rmv\|_{\mathcal C}$,
\\ $\|A^Tf\rmv\|_{\mathcal C}\leqslant(a_1+b_1)\|f\rmv\|_{\mathcal L_1}\leqslant(a_1+b_1)T\|f\rmv\|_{\mathcal C}$,
\item
$\|(A^Tf\rmv)''\|_{\mathcal L_2}\leqslant(a_2+b_2\sqrt T)\|f\rmv\|_{\mathcal C}+a_1\sqrt T\|f\rmv'\|_{\mathcal C}+2a_1\|f\rmv\|_{\mathcal C}+\sqrt b_3\|f\rmv\|_{\mathcal C}+2a_1\|f\rmv'\|_{\mathcal C}\leqslant(4a_1+a_2+(a_1+b_2)\sqrt T+\sqrt{b_3})
\|f\rmv\|_{\mathcal C^1}$, 
\\
$\|A^Tf\rmv\|_{\mathcal L_2}\leqslant\sqrt T\|A^Tf\rmv\|_{\mathcal C}\leqslant(a_1+b_1)T^{\frac32}\|f\rmv\|_{\mathcal C}\leqslant(a_1+b_1)T^{\frac32}\|f\rmv\|_{\mathcal C^1}$.
\end{enumerate}
This means that the claims 1.--3. hold true. Therefore $(A^T_{\mathcal L})^3$ can be viewed as a bounded operator from $\mathcal L_2([0,T];\mathbb C^n)$ to $\mathcal H^2([0,T];\mathbb C^n)$. Since the embedding operator $J_{\mathcal H^2}$ of $\mathcal H^2([0,T];\mathbb C^n)$ into $\mathcal L_2([0,T];\mathbb C^n)$ is bounded, from \eqref{Neumann series} one can see that the restriction $(I+A^T_{\mathcal L})^{-1}\upharpoonright\mathcal H^2([0,T];\mathbb C^n)$ is a bounded operator in $\mathcal H^2([0,T];\mathbb C^n)$:
\begin{equation*}
(I+A^T_{\mathcal L})^{-1}\upharpoonright\mathcal H^2([0,T];\mathbb C^n)=
I-A^T+(A^T)^2-(A^T_{\mathcal L})^3(I+A^T_{\mathcal L})^{-1}J_{\mathcal H^2}.
\end{equation*}
Together with the fact that $I+A^T=(I+A^T_{\mathcal L})\upharpoonright\mathcal H^2([0,T];\mathbb C^n)$ this implies the equalities
\begin{equation*}
(I+A^T_{\mathcal L})^{-1}\upharpoonright\mathcal H^2([0,T];\mathbb C^n)
(I+A^T)
=I_{\mathcal H^2}=
(I+A^T)
(I+A^T_{\mathcal L})^{-1}\upharpoonright\mathcal H^2([0,T];\mathbb C^n),
\end{equation*}
which means that
\begin{equation*}
(I+A^T_{\mathcal L})^{-1}\upharpoonright\mathcal H^2([0,T];\mathbb C^n)=(I+A^T)^{-1}.
\end{equation*}
It follows that $\tilde W^T=I+A^T$ and hence $W^T\rmv$ are isomorphisms in $\mathcal H^2([0,T];\mathbb C^n)$. This completes the proof.
\end{proof}

\section*{Acknowledgements}
The author expresses his deep gratitude to Prof. M.\,I.\,Belishev for his attention to this work and for many fruitful discussions of the subject.

\bigskip
\noindent{\bf Keywords:} Initial-boundary value problem, telegraph equation, matrix Schr\"odinger operator, BC-method, control operator, Goursat problem.

\smallskip
\noindent{\bf MSC:}
35L20, 
47B93, 
47B25. 

\begin{thebibliography}{99}

\bibitem{Avdonin-Mikhailov-2010}
S.\,B.\,Avdonin, V.\,S.\,Mikhailov.
The boundary control approach to inverse spectral theory.
{\it Inverse Problems}, 26, 2010, DOI:10.1088/0266-5611/26/4/045009.

\bibitem{1988}
M.\,I.\,Belishev.
\newblock {On an approach to multidimensional inverse problems for the wave equation.}
\newblock {\it Soviet Math. Dokl.} 33(6): 481--484, 1988.

\bibitem{1997}
M.\,I.\,Belishev.
\newblock {Boundary control in reconstruction of manifolds and metrics (the BC method).}
\newblock {\it Inverse Problems}, 13(5): R1--R45, 1997.


\bibitem{2017}
M.\,I.\,Belishev.
\newblock{Boundary Control and tomography of Riemannian manifolds.}
\newblock {\it Russian Mathematical Surveys} 72(4): 581-544, 2017. DOI:10.4213/rm 9768.

\bibitem{BD_DSBC}
M.\,I.\,Belishev, M.\,N.\,Demchenko.
\newblock{Dynamical system with boundary control associated with a symmetric semibounded operator.}
\newblock{\it Journal of Mathematical Sciences}, 2013, Volume 194, Issue 1, 8--20. DOI: 10.1007/s10958-013-1501-8.

\bibitem{BPush}
M.\,I.\,Belishev, A.\,B.\,Pushnitskii.
\newblock {On triangular factorization of positive operators.}
\newblock{\it Zapiski Nauch. Semin. POMI}, 239 (1997), 45--60.\quad(in Russian).

\bibitem{BSim_A&A_2017}
M.\,I.\,Belishev, S.\,A.\,Simonov.
{\newblock Wave model of the Sturm--Liouville operator on the half-line.}
\newblock {\it St. Petersburg Mathematical Journal}, 29(2), 2018, 227--248.

\bibitem{BSim_ZNSP_2023}
M.\,I.\,Belishev, S.\,A.\,Simonov.
\newblock {A functional model of one class of symmetric semibounded operators.}
\newblock {\it Zapiski Nauch. Semin. POMI}, 521, 2023, 33--53 \quad(in Russian).

\bibitem{BSim_A&A_2024}
M.\,I.\,Belishev, S.\,A.\,Simonov.
\newblock {Triangular factorization and functional models of operators and systems.}
\newblock {\it Algebra i Analiz}, 36(5), 2024, 1--26 \quad(in Russian).

\bibitem{forthcoming}
M.\,I.\,Belishev, S.\,A.\,Simonov.
\newblock{A model and characterization of a class of symmetric semibounded operators.}
{\it Submitted.}

\bibitem{MMM}
V.\,F.\,Derkach, M.\,M.\,Malamud.
\newblock {Theory of symmetric operator extensions and boundary value problems.}
\newblock{\it Kiev}, 2017. ISBN 966-02-2571.

\bibitem{GK}
I.\,Ts.\,Gohberg, M.\,G.\,Krein.
\newblock{Theory and applications of Volterra operators in Hilbert space.}
\newblock{\it Transl. of Monographs No. 24, Amer. Math. Soc}, 
\newblock{Providence. Rhode Island}, 1970.

\bibitem{Clark-Gesztesy}
S.\,Clark, F.\,Gesztesy.
\newblock {On Povzner--Wienholtz-type self-adjointness results for matrix-valued Sturm--Liouville operators.}
\newblock{\it Proc. Royal Soc. Edinb.}, 133A (2003), 747--758.

\bibitem{Vishik}
M.\,I.\,Vishik.
\newblock {On general boundary value problems for elliptic differential equations.}
\newblock{\it Proceedings of Moscow Math. Society}, 1 (1952), 187--246 \,\,(in Russian).
English translation: Amer. Math. Soc. Transl. Ser. 224 (1963), 107--172.

\end{thebibliography}
\end{document}